\documentclass[11pt,a4paper]{article}
\usepackage[T1]{fontenc}
\usepackage{lmodern}
\usepackage[margin=1in]{geometry}
\usepackage{amsmath,amssymb,amsthm,mathtools,bm,mathrsfs}
\usepackage{microtype,booktabs,array,needspace}
\usepackage[hidelinks]{hyperref}
\numberwithin{equation}{section}
\newtheorem{theorem}{Theorem}[section]
\newtheorem{proposition}[theorem]{Proposition}
\newtheorem{lemma}[theorem]{Lemma}
\newtheorem{corollary}[theorem]{Corollary}
\newtheorem*{curvaturelemma}{Algebraic curvature lemma}
\theoremstyle{definition}

\theoremstyle{remark}

\newcommand{\dd}{\mathrm d}
\newcommand{\eps}{\epsilon}
\newcommand{\End}{\operatorname{End}}
\newcommand{\tr}{\operatorname{tr}}
\newcommand{\Ric}{\operatorname{Ric}}
\newcommand{\Hol}{\operatorname{Hol}}
\newcommand{\id}{\operatorname{id}}
\newcommand{\im}{\operatorname{im}}
\newcommand{\diag}{\operatorname{diag}}
\newcommand{\CS}{\operatorname{CS}}
\newcommand{\cC}{\mathcal C}
\newcommand{\cK}{\mathcal K}
\newcommand{\cB}{\mathscr B}
\newcommand{\cR}{\mathscr R}
\newcommand{\ff}{\mathfrak f}
\newcommand{\bb}{\mathfrak b}
\newcommand{\cc}{\mathfrak c}
\newcommand{\dE}{\dd_E}
\newcommand{\dn}{\dd_\nabla}
\newcommand{\R}{\mathbb R}

\DeclareMathOperator{\vol}{vol}
\allowdisplaybreaks[1]
\hypersetup{pdftitle={Chain-level decoupling of the physical heterotic G2 deformation complex at the standard embedding},pdfauthor={Bram Brongers}}
\title{Chain-level decoupling of the physical heterotic $G_2$ deformation complex at the standard embedding}
\author{
Bram Brongers\\
\href{mailto:brambrongers98@gmail.com}{\texttt{brambrongers98@gmail.com}}
}
\date{}
\begin{document}
\maketitle
\begin{abstract}
The physical deformation complex of heterotic $G_2$ compactifications
couples instanton deformations to geometric fields and their first-order
$\alpha'$ corrections. We give an explicit chain-level decoupling of
its first-order coefficient complex at the minimal standard embedding, where the gauge bundle is the tangent bundle, the gauge connection is
the Levi--Civita connection, and the background $G_2$ structure is torsion-free.
The first Atiyah coupling is null-homotopic through a skew covariant
derivative extending the induced connection variation. A second
homotopy, given by the covariant codifferential of the skew part of an
endomorphism-valued cochain, removes the coupling from the bundle sector to the geometric sector. The residual geometric curvature term cancels
against the contribution of the first transformation. These identities
hold on arbitrary cochains in every canonical degree and yield mutually
inverse differential-operator chain maps. Consequently, the cohomology
decomposes into instanton and geometric coefficient cohomologies in all
degrees. The argument requires only $\Hol(g)\subseteq G_2$. To our knowledge, no previous explicit all-degree splitting removes the internal gauge and geometric couplings of this physically reduced standard-embedding coefficient complex.
\end{abstract}

\section{Introduction}
\label{sec:introduction}

The standard embedding \cite{CHSW} relates two connections which enter heterotic
compactification in different ways. The gauge connection controls
instanton deformations, whereas the tangent connection enters both
the geometry and the anomaly equation. At the standard embedding one
identifies the gauge bundle with $TY$ and takes the gauge connection
to be the tangent connection entering the anomaly term. At the
torsion-free background considered here this gives
$A=\nabla^{LC}$ and $F=R$, so the gauge and gravitational
characteristic forms agree when the invariant bilinear forms are
normalized consistently. The corresponding linearized deformation equations nevertheless remain coupled. The linearized
instanton equation couples a change of connection to the change of
the geometric structure defining the instanton projection. At first
order in $\alpha'$, the geometric equations also contain the gauge
curvature pairing and a differential curvature correction. The question remains whether these couplings are nontrivial at the level of the deformation complex.

For a torsion-free $G_2$ manifold equipped with an instanton bundle,
de la Ossa, Larfors and Svanes (DLS) constructed a $G_2$ analogue \cite{DLS} of the extension introduced by Atiyah \cite{Atiyah}. Its off-diagonal curvature map measures
whether a geometric deformation preserves the possibility of an
instanton connection. The relevant cochains belong to the canonical
$G_2$ complex, with cochains in $\Lambda^0$, $\Lambda^1$, $\Lambda^2_7$ and $\Lambda^3_1$: these are functions, one-forms, two-forms in the seven-dimensional representation and three-forms in the trivial representation, respectively
\cite{Carrion,FernandezUgarte}. This supplies a natural language for
deformations of geometry and gauge fields on a seven-manifold.
The extension to heterotic $G_2$ systems incorporates the anomaly
equation and a tangent-connection sector \cite{DLSHeterotic}.

An independent variation of the tangent connection enlarges the
field space beyond the physical metric, two-form and gauge
fluctuations. McOrist, Sticka and Svanes (MSS) eliminate that independent
variable and obtain a physical differential with a curvature times
covariant derivative term \cite{MSS}. Their order-by-order
reformulation is a double extension: the instanton complex is
coupled to the leading geometric complex, and this pair is in turn
coupled to the first-order geometric coefficient complex. Their construction gives the physical double extension and sketches the associated cohomology problem, which is left largely open \cite[Section~7]{MSS}. At the standard embedding, the theorem below explicitly splits these couplings in every degree.  

In the Calabi--Yau setting, Chisamanga, McOrist, Picard and Svanes
proved a standard-embedding cohomology decomposition for the
physical heterotic extension \cite{CMPSS}. This suggests a
corresponding question for $G_2$ geometry, but the complex-geometric
argument cannot be transferred by replacing $\bar\partial$ with
the canonical $G_2$ differential. The latter contains nontrivial intermediate projections, while the
curvature of the induced connection acts on both the form and
coefficient factors. Thus an all-degree chain identity must account
for both effects.

We give an explicit chain-level decoupling at the minimal standard
embedding
\begin{equation}\label{eq:intro-standard}
H=0,\qquad V\simeq TY,\qquad A=\nabla^{LC},\qquad F=R,
\end{equation}
on a torsion-free $G_2$ manifold. More precisely, let $G_0^\bullet$ and $G_1^\bullet$ denote the two
geometric coefficient complexes, and let
$E^\bullet=\cC^\bullet(\End_0(TY))$ be the instanton complex.
The coefficient differential has the form
\begin{equation}\label{eq:intro-D}
D^k=
\begin{pmatrix}
d_1^k&b^k&c^k\\
0&e^k&f^k\\
0&0&d_0^k
\end{pmatrix}.
\end{equation}
Theorem~\ref{thm:main} constructs mutually inverse
differential-operator chain maps giving
\begin{equation}\label{eq:intro-theorem}
(B_\tau^\bullet,D)\cong
(G_1^\bullet,d_1)\oplus(E^\bullet,e)\oplus(G_0^\bullet,d_0).
\end{equation}
The isomorphism in every canonical degree is given by differential operators on smooth sections rather than by a $C^\infty(Y)$-linear bundle map. Only $\Hol(g)\subseteq G_2$ is needed; compactness
and full $G_2$ holonomy play no role in the construction.

Three operator identities lead to this result. For a \(T^*Y\)-valued cochain \(m\), the skew covariant derivative

\[
(\cK m)(X,Y)
=
(\nabla_Ym)(X)
-
(\nabla_Xm)(Y)
\]

extends the induced Levi--Civita connection variation and trivializes the Atiyah coupling:

\[
f=\cK d_0-e\cK.
\]

After shifting the instanton variable by \(\cK m\), the coupling \(b\) is trivialized by the covariant codifferential. Writing \(a^-\) for the metric-skew part of an endomorphism-valued cochain \(a\),

\[
a^-(X,Y)
=
\frac12\bigl(g(X,aY)-g(Y,aX)\bigr),
\]

the homotopy is

\[
L_Ea
=
\frac{\alpha'}2\,
d_\nabla^*a^-,
\qquad
b=d_1L_E-L_Ee,
\]
Here \(a^-\) is regarded canonically as a \(\Lambda^pT^*Y\)-valued two-form.

Finally, the two curvature times derivative terms satisfy the
stronger, unprojected cancellation $c+b\cK=0$. In the two
homotopy calculations, symmetric second derivatives cancel
before curvature identities are used. The remaining curvature
terms from the form factor vanish under the canonical $G_2$
projections. The second calculation also uses Ricci-flatness. The
homotopy identities therefore hold on arbitrary cochains.

The expansion $M=m+\eps x$, with $\eps=\alpha'$ and $\eps^2=0$, determines the geometric coefficient spaces used in \eqref{eq:intro-theorem}. Section~\ref{subsec:weights} specifies how the gauge sector enters the first-order coefficient module. The chain maps preserve its dual-number action.

Related formulations of heterotic $G_2$ deformation theory include the following.
General BPS complexes organize instanton and $G$-structure
deformations, including heterotic systems \cite{BPS};
coupled $G_2$-instantons give a complementary geometric treatment
of the standard embedding \cite{Coupled}. The diagonalization of de la Ossa, Larfors, Magill and Svanes \cite[Section~3.5]{QuantumG2} uses an enlarged superpotential complex and is established at least in homological degree zero. It separates the outer rows while retaining the internally coupled $Q$-valued middle differential. Our theorem instead concerns the physically reduced MSS coefficient complex and removes its internal gauge and geometric couplings by an all-degree chain isomorphism.
Likewise, the decoupling used to establish ellipticity in
\cite[Section~3]{CGT} concerns the highest-order operator.
Here the curvature terms in the complete coefficient
differential are retained and explicitly eliminated.

Section~\ref{sec:geometry} fixes the curvature and projection
conventions. Section~\ref{sec:heterotic} specifies the physical coefficient complex. Sections~\ref{sec:first}
and~\ref{sec:second} prove the two homotopies, including
every canonical degree. Section~\ref{sec:residual} establishes
the residual cancellation and its transgression interpretation.
The full chain isomorphism and its cohomology consequence are
proved in Section~\ref{sec:theorem}. Section~8 compares the Calabi--Yau argument, Section~9 records the physical scope of the result, and Section~10 discusses the nonlinear deformation problem suggested by the linear splitting. The appendices record the projector normalizations, form-degree conversion and coefficient-module details.

\section{Torsion-free \texorpdfstring{$G_2$}{G2} geometry and canonical complexes}
\label{sec:geometry}

\subsection{Forms, curvature and conventions}

Let $Y$ be a smooth seven-manifold with a positive three-form $\varphi$.
Its metric and orientation are denoted by $g$ and $\vol_g$, and
$\psi=*\varphi$. We assume
\begin{equation}\label{eq:torsionfree}
\dd\varphi=0,\qquad \dd\psi=0.
\end{equation}
Equivalently, the Levi--Civita
connection $\nabla$ preserves $\varphi$ and $\psi$ and
$\Hol(g)\subseteq G_2$ \cite{FernandezGray,DLS}.
No compactness or full-holonomy assumption is made.

The metric scalar product on forms is fixed by
$\alpha\wedge*\beta=\langle\alpha,\beta\rangle\vol_g$.
Our normalization is $\varphi\wedge\psi=7\vol_g$; its standard
form and numerical contractions are recorded in
Appendix~\ref{app:realization}.
We identify a two-form $A$ with the skew endomorphism $A^\sharp$
by $g(X,A^\sharp Y)=A(X,Y)$.

We use the curvature convention
\begin{equation}\label{eq:curvature-conventions}
R(X,Y)=[\nabla_X,\nabla_Y]-\nabla_{[X,Y]},
\end{equation}
together with the standard Riemannian curvature symmetries and
Bianchi identities.

The consequences of \eqref{eq:torsionfree} used below are
\begin{equation}\label{eq:background-curvature}
\nabla\varphi=\nabla\psi=0,\qquad \Ric(g)=0,\qquad
R\in\Omega^2_{14}(Y,\mathfrak g_2(TY)).
\end{equation}
The Ricci-flatness is the usual consequence of the parallel spinor
of a torsion-free $G_2$ structure \cite{DLS}.
The identity $\nabla\varphi=0$ makes $R(X,Y)$ valued in
$\mathfrak g_2$. Regard curvature as the self-adjoint operator
$\mathcal R:\Lambda^2TY\to\Lambda^2TY$, using the metric
identification above. Its image lies in $\Lambda^2_{14}$.
By self-adjointness it annihilates the orthogonal summand
$\Lambda^2_7$. Thus every scalar two-form obtained by pairing the
endomorphism value of $R$ with a fixed endomorphism has exterior
type 14. This is the curvature property used by the canonical
projections below. It proves the Levi--Civita instanton condition
without assuming full holonomy.

\subsection{Projectors and the two vanishing identities}

The decompositions are
\begin{equation}\label{eq:decompositions}
\Lambda^2=\Lambda^2_7\oplus\Lambda^2_{14},\qquad
\Lambda^3=\Lambda^3_1\oplus\Lambda^3_7\oplus\Lambda^3_{27}.
\end{equation}
Here
\[
\Lambda^2_7=\{\iota_X\varphi:X\in TY\},\qquad
\Lambda^2_{14}=\{\eta:\eta\wedge\psi=0\}\simeq\mathfrak g_2,
\qquad \Lambda^3_1=\R\varphi.
\]
We use the orthogonal $G_2$-equivariant projections
$\pi^2_7,\pi^2_{14},\pi^3_1,\pi^3_7,\pi^3_{27}$ onto these summands.
Their numerical coefficients are recorded in Appendix~\ref{app:realization}.
Superscripts will be omitted when the exterior degree determines
the projector.

\begin{lemma}\label{lem:projection}
For $\eta\in\Lambda^2_{14}$ and $\lambda\in\Lambda^1$,
\begin{equation}\label{eq:projection-vanishing}
\pi^2_7\eta=0,\qquad
\eta\wedge\psi=0,\qquad
\pi^3_1(\eta\wedge\lambda)=0.
\end{equation}
The same statements hold coefficientwise for bundle-valued forms.
\end{lemma}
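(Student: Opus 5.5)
The plan is to prove the three identities one at a time, directly from the definitions of the decomposition and of the orthogonal projectors.

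\emph{First identity.} Since $\pi^2_7$ is the orthogonal projection onto $\Lambda^2_7$, and $\Lambda^2_{14}$ is the orthogonal complement of $\Lambda^2_7$ in the $G_2$-invariant splitting \eqref{eq:decompositions}, we get $\pi^2_7\eta=0$ for $\eta\in\Lambda^2_{14}$ immediately.

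\emph{Second identity.} The relation $\eta\wedge\psi=0$ is exactly the defining condition of $\Lambda^2_{14}$ as written above. For consistency I will also check that this description is the orthogonal complement of $\Lambda^2_7=\{\iota_X\varphi\}$. This follows from the pointwise identity
\[
\iota_X\varphi\wedge\psi=3\,*X^\flat
\]
in the normalization $\varphi\wedge\psi=7\vol_g$; the constant is recorded in Appendix~\ref{app:realization}, and only its nonvanishing matters. Applying the identity gives
\[
\eta\wedge\psi\wedge X^\flat=\pm\langle \eta,\iota_X\varphi\rangle\vol_g
\]
up to a nonzero constant. Hence $\eta\wedge\psi=0$ if and only if $\eta\perp\Lambda^2_7$.

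\emph{Third identity.} Since $\Lambda^3_1=\R\varphi$, we have
\[
\pi^3_1(\eta\wedge\lambda)=\frac{\langle\eta\wedge\lambda,\varphi\rangle}{|\varphi|^2}\,\varphi ,
\]
so it suffices to show that $\langle \eta\wedge\lambda,\varphi\rangle=0$. Using $*\varphi=\psi$, write
\[
\langle \eta\wedge\lambda,\varphi\rangle\vol_g=\eta\wedge\lambda\wedge\psi=\pm\,\lambda\wedge(\eta\wedge\psi),
\]
which vanishes by the second identity. The sign comes from reordering even- and odd-degree forms and is irrelevant. An alternative argument uses equivariance: $\eta\otimes\lambda\in\mathfrak g_2\otimes\R^7\cong V_{7}\oplus V_{27}\oplus V_{64}$ contains no trivial summand, so any $G_2$-equivariant map to $\Lambda^3_1$ vanishes. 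I will use the wedge argument as the primary one because it needs no representation theory.

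\emph{Bundle-valued forms.} For forms with values in a vector bundle $V$, all three operations are applied to the form factor only. Working in a local frame $\{s_a\}$ of $V$, write $\eta=\sum_a\eta^a\otimes s_a$ with each $\eta^a\in\Lambda^2_{14}$; this decomposition is well defined because the projections are $C^\infty$-linear and act on the form factor. The scalar statements then apply to each $\eta^a$, and linearity gives the vector-valued result. The decomposition is independent of frame, since the projectors commute with $\End(V)$-valued changes of frame.

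The only potential obstacle is a normalization or sign check in the second step, namely that $\iota_X\varphi\wedge\psi$ is a \emph{nonzero} multiple of $*X^\flat$. I would verify this once in the standard flat model $\varphi=e^{123}+\cdots$ of Appendix~\ref{app:realization}, for instance with $X=e_1$, and then extend it to all $X$ by $G_2$-equivariance.
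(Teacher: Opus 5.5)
Your proposal is correct. For the first two identities it agrees with the paper, which appeals to orthogonality and to the defining description of $\Lambda^2_{14}$.

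For the third identity you and the paper evaluate the same pairing $\langle\eta\wedge\lambda,\varphi\rangle$, but you use different characterizations of $\Lambda^2_{14}$:
\begin{itemize}
\item The paper moves $\lambda$ across by adjointness, $\langle\eta\wedge\lambda,\varphi\rangle=\langle\eta,\iota_{\lambda^\sharp}\varphi\rangle$. It then uses $\Lambda^2_7\perp\Lambda^2_{14}$.
\item You use $*\varphi=\psi$ to write the pairing as $\lambda\wedge(\eta\wedge\psi)\,$. You then apply the kernel condition that defines $\Lambda^2_{14}$ in Section~\ref{sec:geometry}.
\end{itemize}
Your route uses that definition literally and needs no orthogonality. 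The paper's route avoids the Hodge star and needs only the orthogonal splitting.

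The two routes are linked by the general identity $*(\iota_X\varphi)=X^\flat\wedge\psi$. This gives $\langle\eta,\iota_X\varphi\rangle\vol_g=\eta\wedge X^\flat\wedge\psi$, so each characterization of $\Lambda^2_{14}$ implies the other.

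This also corrects your side consistency check. The displayed relation between $\eta\wedge\psi\wedge X^\flat$ and $\langle\eta,\iota_X\varphi\rangle$ comes from that Hodge-star identity. It involves no $G_2$-specific constant, and it does not come from $\iota_X\varphi\wedge\psi=3*X^\flat$. That latter identity, together with a dimension count, only shows that $\ker(\cdot\wedge\psi)$ is a complement of $\Lambda^2_7$; it does not show the complement is orthogonal. So the normalization check you flag as the only obstacle is not the relevant one.

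In any case the check is not needed for the lemma. The paper takes the orthogonal splitting with both descriptions as given.

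Your representation-theoretic alternative, via $\mathfrak g_2\otimes\R^7\cong V_7\oplus V_{27}\oplus V_{64}$ with no trivial summand, is also valid, and your coefficientwise extension is fine.
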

\begin{proof}
The first two assertions follow from orthogonality and the
description of $\Lambda^2_{14}$. For the last assertion, the
adjointness of wedge and interior multiplication gives
\begin{equation}\label{eq:scalar-projection-calculation}
\langle\eta\wedge\lambda,\varphi\rangle
=\langle\eta,\iota_{\lambda^\sharp}\varphi\rangle=0,
\end{equation}
since $\iota_{\lambda^\sharp}\varphi\in\Lambda^2_7$ and
$\Lambda^2_7\perp\Lambda^2_{14}$.
Orthogonal projection onto $\R\varphi$ gives the claim. This also proves
$\pi^3_1(\lambda\wedge\eta)=0$, since a one-form and a two-form commute
under the wedge product.
\end{proof}

\subsection{Canonical cochains and bundle differentials}

For a bundle $W$ with connection $\nabla^{W}$, define
\begin{equation}\label{eq:canonical-spaces}
\begin{gathered}
\cC^k(W)=\Gamma(\Lambda^k_{\rm can}T^*Y\otimes W),\\
\Lambda^0_{\rm can}=\Lambda^0,\quad
\Lambda^1_{\rm can}=\Lambda^1,\quad
\Lambda^2_{\rm can}=\Lambda^2_7,\quad
\Lambda^3_{\rm can}=\Lambda^3_1.
\end{gathered}
\end{equation}
All other degrees are zero. Let $\Pi_k$ be the corresponding
projection; $\Pi_0$ and $\Pi_1$ are identities and $\Pi_4=0$.
The canonical differential is
\begin{equation}\label{eq:canonical-differential}
\check\dd_{W}^k=\Pi_{k+1}\dd_{W}\big|_{\cC^k(W)}.
\end{equation}
The scalar complex and its instanton-valued extension originate in
the work of Reyes Carri\'on and Fern\'andez--Ugarte
\cite{Carrion,FernandezUgarte}; the notation follows the
$G_2$ deformation setting of \cite{DLS}.

Suppose the curvature $F_{W}$ has exterior type 14. Then
\begin{equation}\label{eq:canonical-sequence}
0\longrightarrow\cC^0(W)\xrightarrow{\check\dd_{W}}
\cC^1(W)\xrightarrow{\check\dd_{W}}
\cC^2(W)\xrightarrow{\check\dd_{W}}\cC^3(W)
\longrightarrow0
\end{equation}
is a complex. Indeed, on a zero-form $u$ its square is
$\pi_7(F_{W}u)=0$. For a one-form $v$, put
$\eta=\pi_{14}\dd_{W}v$. Since $\nabla\varphi=0$,
$\nabla\pi_{14}=0$; together with Lemma~\ref{lem:projection} this gives
$\pi_1\dd_{W}\eta=\pi_1(e^i\wedge\nabla_i^{W}\eta)=0$. Thus
\begin{equation}\label{eq:canonical-square-one}
\check\dd_{W}^2\check\dd_{W}^1v
=\pi_1\dd_{W}\pi_7\dd_{W}v
=\pi_1(F_{W}\wedge v)-\pi_1\dd_{W}\eta=0.
\end{equation}
There are no further nontrivial two-step compositions.

For $W=T^*Y$ we write the unprojected differential as $\dn$.
For $E=\End_0(TY)$ we write $\dE$ for the exterior covariant
derivative induced by $\nabla$. Both bundles have instanton curvature by
\eqref{eq:background-curvature}.

\section{The heterotic coefficient complex}
\label{sec:heterotic}

\subsection{Standard embedding and invariant form}

Throughout, by the standard embedding we mean a fixed identification of the gauge bundle with $TY$, under which
\begin{equation}\label{eq:standard-embedding}
V=TY,\qquad A=\nabla,\qquad F=R,\qquad H=0.
\end{equation}
We work with the endomorphism coefficient bundle
\begin{equation}\label{eq:E-definition}
E=\End_0(TY)=\ker\bigl(\tr:\End(TY)\longrightarrow\R\bigr).
\end{equation}
 Although $\cK m$ is skew and the background curvature is $\mathfrak g_2$-valued, the coefficient bundle of the complex is the full trace-free endomorphism bundle. The connection on $E$ is induced by $\nabla$.
The restriction from $\End(TY)$ to \eqref{eq:E-definition} is
preserved by the differential: the curvature insertion is skew,
and hence trace-free, and the induced covariant derivative
preserves trace. 

The invariant bilinear form used to write the curvature coupling
is the ordinary matrix trace in the real seven-dimensional
representation:
\begin{equation}\label{eq:tau}
\tau(S,T)=\tr_7(ST).
\end{equation}
For matrix-valued forms, $\tau(\alpha\wedge\beta)$
means $\tr_7(\alpha\wedge\beta)$ with matrix multiplication
in the displayed order. More generally, $\tau$ applied to a matrix-valued product denotes the trace of the corresponding matrix product.
For exterior degrees $p,q$,
$\tau(\alpha\wedge\beta)=(-1)^{pq}\tau(\beta\wedge\alpha)$.
 The same invariant form is used on the gauge and tangent copies in \eqref{eq:standard-embedding}, so at the standard embedding $\tau(F\wedge F)=\tau(R\wedge R)$. 

\subsection{Physical variables and explicit operators}

The physical variable of \cite[Section~4]{MSS} is a
$T^*Y$-valued one-form $M$, combining the metric and
anomaly-trivialized two-form fluctuations, together with
$a=\delta A$. At leading order one may write
$M=\tfrac12(h+\beta)$, with $h\in\Gamma(S^2T^*Y)$ and
$\beta\in\Omega^2(Y)$. Its extension to canonical cochains gives the
complex considered here. The independent tangent-connection
variation has already been eliminated. A
curvature times covariant derivative operator remains in
the geometric diagonal block. This physical elimination
and the same field variables are also described in
\cite[Section~2 and Appendix~A]{MSSMetric}.

Define the endomorphism-valued one-form
\begin{equation}\label{eq:curvature-insertion}
\cR_Z=\iota_ZR,\qquad \cR_Z(X)=R(Z,X).
\end{equation}
For $m\in\Omega^p(Y,T^*Y)$, put $m_X=m(\,\cdot\,,X)$ and
define
\[
\cB:\Omega^p(Y,T^*Y)\longrightarrow\Omega^p(Y,\End(TY)),
\qquad
g\bigl(X,\cB(m)Y\bigr)=(\nabla_Xm)_Y.
\]
For a $T^*Y$-valued $p$-form $m$ and an $E$-valued $p$-form
$a$, define the unprojected operators by
\begin{align}
\ff m=\mathcal F_R(m)
&=\operatorname{tr}_g\bigl((\iota_{\bullet}R)\wedge m_{\bullet}\bigr),
\label{eq:f-unprojected}\\
(\bb a)(Z)&=-\frac{\eps}{4}\tau(\cR_Z\wedge a),
\label{eq:b-unprojected}\\
(\cc m)(Z)&=-\frac{\eps}{2}\tau(\cR_Z\wedge \cB(m)).
\label{eq:c-unprojected}
\end{align}
Here $\operatorname{tr}_g$ denotes contraction of the displayed
$TY$ and $T^*Y$ factors with the metric. We write
$\bb=\eps\bb_0$ and $\cc=\eps\cc_0$, where
$\bb_0,\cc_0$ are the coefficients of $\eps$.
These operators give the invariant-form convention for the
physically reduced differential of \cite[(4.10)--(4.11), Section~7]{MSS}
used throughout.\footnote{
    With the curvature convention \eqref{eq:curvature-conventions} and invariant
    form \eqref{eq:tau}, the reduced gravitational curvature term obtained directly
    from the parent equation and connection variation
    \cite[(4.1), (4.5)--(4.8)]{MSS} has the opposite sign from the corresponding
    term as displayed in \cite[(4.10)--(4.11)]{MSS}, after translation to these
    conventions. We use the sign obtained from the parent equations throughout;
    it is also the sign for which the reduced coefficient differential squares to
    zero.
}

Before coefficient extraction, the two-block physical operator is
\begin{equation}\label{eq:physical-two-block}
\check D=
\Pi\begin{pmatrix}
\dn+\cc&\bb\\
\ff&\dE
\end{pmatrix}.
\end{equation}

\subsection{The weighted coefficient module}
\label{subsec:weights}

Put
\begin{equation}\label{eq:coefficient-ring}
S=\R[\eps]/(\eps^2),\qquad
G^\bullet=\cC^\bullet(T^*Y),\qquad
E^\bullet=\cC^\bullet(E),
\end{equation}
and write $d=\check\dd_\nabla$ and $e=\check\dd_E$.
The order convention of \cite[Section~7]{MSS} retains $M=m+\eps x$ and only the leading gauge coefficient. We encode this convention by letting multiplication by $\eps$ annihilate the gauge sector. The module for the first-order truncated equations is
\begin{equation}\label{eq:weighted-module}
B_\tau^\bullet=(G^\bullet\otimes_\R S)\oplus E^\bullet,
\qquad \eps E^\bullet=0.
\end{equation}
As real spaces, set $G_0^\bullet=G^\bullet$ and
$G_1^\bullet=\eps G^\bullet$ and use the ordered coordinates
$(\eps x,a,m)$. The diagonal differentials are
$d_0m=dm$ and $d_1(\eps x)=\eps dx$.

The projected couplings, with their domains, are
\begin{align}
f^k&=\Pi_{k+1}\ff:
G_0^k\longrightarrow E^{k+1},
\label{eq:f-projected}\\
b^k&=\Pi_{k+1}\bb:
E^k\longrightarrow G_1^{k+1},
\qquad
c^k=\Pi_{k+1}\cc:
G_0^k\longrightarrow G_1^{k+1}.
\label{eq:bc-projected}
\end{align}

All maps out of degree $3$ are zero. Define
\begin{equation}\label{eq:D-definition}
D^k=
\begin{pmatrix}
d_1^k&b^k&c^k\\
0&e^k&f^k\\
0&0&d_0^k
\end{pmatrix}
\quad\text{on }G_1^k\oplus E^k\oplus G_0^k.
\end{equation}
Indeed, expanding the geometric row of
\eqref{eq:physical-two-block} gives
\begin{equation}\label{eq:expanded-equations}
\dn m+\eps\bigl(\dn x+\cc_0m+\bb_0a\bigr),
\end{equation}
and the gauge component is $\dE a+\ff m$.
Applying the canonical projections gives
\eqref{eq:D-definition}.

In unscaled real coordinates $(x,a,m)$, the matrix of
$D$ has first row $(d,b_0,c_0)$, where $b_0,c_0$ are
the coefficients of $\eps$ in $b,c$. Multiplication by
$\eps$ is the fixed map
\begin{equation}\label{eq:N-action}
N(x,a,m)=(m,0,0).
\end{equation}
The equality $DN=ND$ follows because $d_0$ and $d_1$
are the same geometric differential on their respective
coefficients. It makes \eqref{eq:D-definition} an
$S$-linear operator on \eqref{eq:weighted-module}.

Thus the geometric sector is free over $S$, whereas the gauge sector is an $S/(\eps)$-module.

\subsection{The two extensions}
\label{subsec:extensions}

Let $A^\bullet=E^\bullet\oplus G_0^\bullet$, equipped with
\begin{equation}\label{eq:first-extension-D}
D_A^k=\begin{pmatrix}e^k&f^k\\0&d_0^k\end{pmatrix}.
\end{equation}
The first extension is
\begin{equation}\label{eq:first-SES}
0\longrightarrow E^\bullet
\xrightarrow{\,a\mapsto(a,0)\,}A^\bullet
\xrightarrow{\,(a,m)\mapsto m\,}G_0^\bullet
\longrightarrow0.
\end{equation}
The second extension is
\begin{equation}\label{eq:second-SES}
0\longrightarrow G_1^\bullet
\xrightarrow{\,u\mapsto(u,0,0)\,}B_\tau^\bullet
\xrightarrow{\,(u,a,m)\mapsto(a,m)\,}A^\bullet
\longrightarrow0.
\end{equation}
Both sequences are exact. The triangular matrices show
that the inclusions and quotients intertwine the
differentials.

Besides the diagonal squares proved in
Section~\ref{sec:geometry}, the required square identities
are
\begin{equation}\label{eq:three-block-identities}
ef+fd_0=0,\qquad d_1b+be=0,\qquad
d_1c+cd_0+bf=0.
\end{equation}
Adjacent canonical degree labels are implicit in this
display. These are the standard-embedding
double-extension identities. The proofs below establish
stronger formulas from which all three follow
algebraically; see Proposition~\ref{prop:nilpotency}.

\section{Splitting the \texorpdfstring{$G_2$}{G2} Atiyah extension}
\label{sec:first}

\subsection{The skew covariant derivative}

For $m\in\Omega^p(Y,T^*Y)$, define
\begin{equation}\label{eq:K-definition}
\begin{gathered}
\cK:\Omega^p(Y,T^*Y)\longrightarrow
\Omega^p(Y,\mathfrak{so}(TY))\subset\Omega^p(Y,E),\\
(\cK m)(X,Y)=(\nabla_Ym)(X)-(\nabla_Xm)(Y).
\end{gathered}
\end{equation}
Here $X,Y$ are arguments of the $T^*Y$ factor, and the metric
identifies the resulting two-form with a skew endomorphism.
Because $\nabla\Pi_k=0$, $\cK$ commutes with the canonical
projections:
\begin{equation}\label{eq:K-projection}
\Pi_k\cK=\cK\Pi_k.
\end{equation}
Thus its restriction is $\cK^k:G_0^k\to E^k$.

Writing $m_X=m(\,\cdot\,,X)$, the curvature insertion
\eqref{eq:f-unprojected} has the equivalent local expression
\[
(\mathcal F_R(m))(X,Y)=\sum_j e^j\wedge m_{R(X,Y)e_j}.
\]
Indeed, pair symmetry gives
$g(X,R(e_j,V)Y)=g(e_j,R(X,Y)V)$; contraction with $m_{e_j}$
proves the equality. The first Bianchi identity also writes its
coefficient as $R(X,Y)V=R(V,Y)X-R(V,X)Y$, the form used below.
The projected operator is $f^k=\Pi_{k+1}\mathcal F_R$.

For $S\in\Omega^2(Y,TY)$ define the insertion derivation
$\mathcal I_S:\Omega^p(Y)\to\Omega^{p+1}(Y)$ by
\[
\mathcal I_{\beta\otimes X}\alpha=\beta\wedge\iota_X\alpha,
\qquad R_X(U,V)=R(U,V)X.
\]
This definition extends linearly and uses only tensor contraction.

\begin{proposition}\label{prop:K}
On every canonical cochain, the curvature insertion
satisfies
\begin{equation}\label{eq:K-homotopy}
f^k=\cK^{k+1}d_0^k-e^k\cK^k,\qquad k=0,1,2,3.
\end{equation}
Here the outgoing maps at degree $3$ and $\cK^4$ are zero.
\end{proposition}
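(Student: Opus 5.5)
The plan is to prove an \emph{unprojected} identity first and then apply $\Pi_{k+1}$. Because $\nabla\Pi_k=0$, \eqref{eq:K-projection} gives $\cK^{k+1}d_0^k=\Pi_{k+1}\cK\dn$ on $G_0^k$ and $e^k\cK^k=\Pi_{k+1}\dE\cK$. So \eqref{eq:K-homotopy} reduces to showing
\[
\cK\dn m-\dE\cK m=\mathcal F_R(m)+\mathcal Q(m),\qquad \Pi_{k+1}\mathcal Q(m)=0
\]
for $m\in\cC^k(T^*Y)$. Here $\mathcal Q$ is a remainder coming from curvature acting on the form factor. In degree $3$ both sides vanish by convention, since $\Pi_4=0$ and $\cK^4=0$. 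It therefore suffices to treat $k=0,1,2$.

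\textbf{Step 1: commutator computation.} Work at a point in a synchronous (normal) frame $e_i$, and write $\dn m=e^i\wedge\nabla_i m$ and $\dE=e^i\wedge\nabla_i$. Then
\[
(\cK\dn m)(X,Y)-(\dE\cK m)(X,Y)=e^i\wedge\bigl([\nabla_Y,\nabla_i]m\bigr)_X-e^i\wedge\bigl([\nabla_X,\nabla_i]m\bigr)_Y .
\]
The symmetric second derivatives cancel here before any curvature identity is used. Each commutator is $R(Y,e_i)$ acting as a derivation on $\Lambda^pT^*Y\otimes T^*Y$, so it splits into a coefficient part and a form part.

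\textbf{Step 2: coefficient part.} The action on the $T^*Y$ factor gives $-e^i\wedge m_{R(Y,e_i)X}+e^i\wedge m_{R(X,e_i)Y}$. Using the first Bianchi identity in the form $R(X,Y)V=R(V,Y)X-R(V,X)Y$ recorded before the proposition, together with pair symmetry, this combination becomes $\sum_j e^j\wedge m_{R(X,Y)e_j}=\mathcal F_R(m)(X,Y)$. I will fix signs by checking the case $p=0$, where the form part vanishes identically.

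\textbf{Step 3: form part, the main obstacle.} The action on the $\Lambda^p$ factor of the scalar form $m_X$ gives, up to sign,
\[
\sum_{i,j}e^i\wedge\bigl(R(Y,e_i)e_j\bigr)^\flat\wedge\iota_{e_j}m_X-(X\leftrightarrow Y),
\]
which is an insertion derivation $\mathcal I_S$ of the type defined above. For fixed $Y$ and $j$, the two-form $\sum_i e^i\wedge g(\cdot,R(Y,e_i)e_j)$ can be rewritten via pair symmetry and Bianchi as a scalar two-form obtained by pairing the endomorphism value of $R$ with a fixed endomorphism. By \eqref{eq:background-curvature} and the discussion following it, such a form lies in $\Lambda^2_{14}$. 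Hence every term of $\mathcal Q(m)$ has the form $\eta\wedge\iota_{e_j}m_X$ with $\eta$ of type $14$.

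The cases then finish as follows:
\begin{itemize}
\item For $k=0$ we have $\iota m=0$, so $\mathcal Q=0$.
\item For $k=1$, $\iota_{e_j}m_X$ is a function, so $\mathcal Q$ is of type $14$ and is killed by $\pi^2_7$, the first identity of Lemma~\ref{lem:projection}.
\item For $k=2$, $\iota_{e_j}m_X$ is a one-form, so $\pi^3_1$ kills $\eta\wedge\lambda$ by the third identity of Lemma~\ref{lem:projection}.
\end{itemize}
This holds coefficientwise on the $\End(TY)$ factor, so no projectedness of $m$ is needed: the identity holds on arbitrary cochains.

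The delicate point is Step 3. It requires correctly reorganizing the curvature index contraction so that the exterior two-form factor, and not merely one slot of $R$, is manifestly a pairing of $R$'s endomorphism value with a fixed endomorphism. It also requires keeping track of signs consistently between Steps 2 and 3.
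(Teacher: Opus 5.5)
Your proposal is correct and follows essentially the same route as the paper. Both arguments do an unprojected commutator computation in a normal frame, then use the Bianchi identity to turn the coefficient part into $\mathcal F_R(m)$; your sign $\cK\dn-\dE\cK=\mathcal F_R+\mathcal Q$ agrees with the paper's $(\dE\cK-\cK\dn)m=-\mathcal F_R(m)+\mathscr Q_{\cK}(m)$. Both then write the form-factor remainder as type-14 two-forms wedged with $\iota_{e_j}m_X$, which $\pi_7$ and $\pi_1$ kill by Lemma~\ref{lem:projection}. The only cosmetic difference is that you identify $\sum_i e^i\wedge(R(Y,e_i)e_j)^\flat=g(e_j,R(\cdot,\cdot)Y)$ directly, whereas the paper antisymmetrizes in $i,j$ to obtain $-\mathcal I_{R_Y}$; these are the same two-forms.
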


\begin{proof}
We first calculate before projection in an arbitrary
exterior degree $p$. Choose an orthonormal frame that is normal at the point of calculation, and extend $X,Y$ with vanishing covariant
derivatives there. Expanding the two compositions gives
\begin{align}
((\dE\cK-\cK\dn)m)(X,Y)
&=\sum_i e^i\wedge\bigl(
 ([\nabla_{e_i},\nabla_Y]m)_X
 -([\nabla_{e_i},\nabla_X]m)_Y\bigr).
\label{eq:K-second-jets}
\end{align}
The symmetric second-derivative terms cancel in
\eqref{eq:K-second-jets}; the remaining expression is tensorial.

The curvature of the induced connection on
$\Lambda^pT^*Y\otimes T^*Y$ is the sum of its actions on the form
and $T^*Y$ factors. Let $\rho_p(A)$ be the induced action of an
endomorphism on $p$-forms, so that
$\rho_p(A)\alpha=-\sum_j e^j\wedge\iota_{Ae_j}\alpha$.
Then
\begin{equation}\label{eq:curvature-pform-covector}
([\nabla_U,\nabla_V]m)_X
=-m_{R(U,V)X}+\rho_p(R(U,V))m_X.
\end{equation}
The commutators here are evaluated in the same normal extensions.

The contribution of the first term of
\eqref{eq:curvature-pform-covector} to
\eqref{eq:K-second-jets} is
\[
\sum_i e^i\wedge\bigl(-m_{R(e_i,Y)X}+m_{R(e_i,X)Y}\bigr).
\]
The algebraic Bianchi identity reads
\begin{equation}\label{eq:K-free-Bianchi}
R(e_i,X)Y-R(e_i,Y)X=-R(X,Y)e_i.
\end{equation}
Thus this contribution is $-\mathcal F_R(m)$, with exactly the
sign in \eqref{eq:f-unprojected}.

For the form-factor contribution, a second use of
the same identity gives
\begin{equation}\label{eq:form-Bianchi}
R(U,Y)V-R(V,Y)U=R(U,V)Y.
\end{equation}
Antisymmetrizing the two exterior factors therefore yields
\begin{equation}\label{eq:form-curvature-reduction}
\sum_i e^i\wedge\rho_p(R(e_i,Y))\alpha=-\mathcal I_{R_Y}\alpha.
\end{equation}
Combining the two contributions proves the complete
unprojected identity
\begin{equation}\label{eq:K-unprojected}
\begin{gathered}
(\dE\cK-\cK\dn)m=-\mathcal F_R(m)+\mathscr Q_{\cK}(m),\\
\mathscr Q_{\cK}(m)(X,Y)
=-\mathcal I_{R_Y}m_X+\mathcal I_{R_X}m_Y.
\end{gathered}
\end{equation}

Applying the canonical differential and using \eqref{eq:K-projection},
\begin{equation}\label{eq:K-projected-composition}
e^k\cK^km-\cK^{k+1}d_0^km
=\Pi_{k+1}(\dE\cK-\cK\dn)m
\end{equation}
for $m\in G_0^k$. This equality accounts for the
intermediate projection in the second composition.
We check $\mathscr Q_{\cK}(m)$ in each degree.

For $k=0$, insertion into a zero-form is zero, so it vanishes before
projection. For $k=1$, the inserted form is a scalar;
each summand is a scalar curvature two-form in
$\Lambda^2_{14}$, and its $\Pi_2=\pi_7$ projection
vanishes. For $k=2$, the inserted form is a one-form.
Each summand has the form $\eta_{14}\wedge\lambda_1$,
whose $\Pi_3=\pi_1$ projection is zero by
Lemma~\ref{lem:projection}.
For $k=3$, all outgoing arrows are zero.
These four statements prove \eqref{eq:K-homotopy}.
\end{proof}

\subsection{Chain splitting and exact representatives}\label{subsec:K-splitting}

Let $S_K:A^\bullet\to E^\bullet\oplus G_0^\bullet$ and
$T_K:E^\bullet\oplus G_0^\bullet\to A^\bullet$ be
\begin{equation}\label{eq:first-ST}
S_K^k(a,m)=(a-\cK^km,m),\qquad
T_K^k(a',m)=(a'+\cK^km,m).
\end{equation}
The two compositions cancel the added $\cK^km$ and
are the identity. Applying the differential gives
\begin{align}
D_A^kT_K^k(a',m)
&=\bigl(e^ka'+(e^k\cK^k+f^k)m,d_0^km\bigr)
\notag\\
&=\bigl(e^ka'+\cK^{k+1}d_0^km,d_0^km\bigr)
\notag\\
&=T_K^{k+1}(e^ka',d_0^km).
\label{eq:first-chain-calculation}
\end{align}
Thus these are inverse chain isomorphisms.
For a closed geometric cochain, the primitive for its
Atiyah image is
\begin{equation}\label{eq:first-primitive}
d_0^km=0\quad\Longrightarrow\quad
f^km=-e^k(\cK^km).
\end{equation}
Equivalently, $(\cK^km,m)$ is a closed lift in
$A^k$. This proves vanishing of the first connecting
homomorphism in every degree, on arbitrary closed
representatives.

For a geometric gauge parameter $\xi\in G_0^0$, \eqref{eq:K-homotopy} gives
\begin{equation}\label{eq:K-exact}
\cK^1d_0^0\xi=e^0\cK^0\xi+f^0\xi.
\end{equation}
Thus the induced connection variation decomposes into the Atiyah term $f^0\xi$ and the exact bundle term $e^0\cK^0\xi$. Replacing $m$ by $m+d_0\xi$
changes \eqref{eq:first-primitive} according to
$fd_0=-ef$, as required by the gauge complex.

\subsection{Levi--Civita variation and the instanton equation}
\label{subsec:LC}

For a family of torsion-free structures, we relate $\cK$ to the induced Levi--Civita variation, including the frame and antisymmetric-field contributions.

Let $g_t$ be a metric family and put $h=\dot g$.
For the variation of its connection, write
$L_XY=\left.\frac{d}{dt}\right|_0\nabla^{g_t}_XY$.
Differentiating metric compatibility and zero torsion gives
\begin{equation}\label{eq:LC-linear-system}
\begin{gathered}
(\nabla_Xh)(Y,Z)=g(L_XY,Z)+g(Y,L_XZ),\\
L_XY=L_YX.
\end{gathered}
\end{equation}
Add the compatibility equations with derivative directions $X$ and
$Y$, subtract the one with derivative direction $Z$, and use the
second identity in \eqref{eq:LC-linear-system}. The result is the
variation of the Koszul formula:
\begin{equation}\label{eq:LC-coordinate}
2g(L_XY,Z)=(\nabla_Xh)(Y,Z)+(\nabla_Yh)(X,Z)-(\nabla_Zh)(X,Y).
\end{equation}
This recovers the coordinate formula in
\cite[(4.37)]{DLS}; $L_X$ is not in general skew.

To identify the varying metrics by orthonormal coframes,
let the infinitesimal coframe endomorphism be
$U=\frac12h^\sharp+\chi$, with $\chi$ skew.
Differentiating the induced change of connection gives
$K^{LC,\chi}=L-\nabla U$. Consequently
\begin{equation}\label{eq:LC-spin}
\begin{split}
g(X,K^{LC,\chi}_T Y)
={}&\frac12\bigl((\nabla_Yh)(T,X)-(\nabla_Xh)(T,Y)\bigr)\\
&-g(X,(\nabla_T\chi)Y).
\end{split}
\end{equation}
The expression is skew and depends only on the metric
variation and the chosen frame identification.

Write a one-cochain as $m=s+q$,
with $s\in S^2T^*Y$, $q\in\Lambda^2T^*Y$ and $h=2s$.
Here the associated two-tensor is $m(V,X)=m_X(V)$, and $q$ is
its antisymmetric part.
For $q=0$, \eqref{eq:LC-spin} in the symmetric
frame $\chi=0$ is $\cK^1m$. More generally, identify
$q$ with the ordinary two-form and take $\chi=q^\sharp$.
Since
\[
\dd q(T,X,Y)=(\nabla_Tq)(X,Y)
             +(\nabla_Xq)(Y,T)+(\nabla_Yq)(T,X),
\]
substitution in \eqref{eq:LC-spin} proves
\begin{equation}\label{eq:LC-q}
\cK^1m=K^{LC,q}+\dd q.
\end{equation}
Here $K^{LC,q}$ abbreviates $K^{LC,q^\sharp}$, and a three-form
is viewed as a skew-endomorphism-valued one-form by
$g(X,(\dd q)_T^\sharp Y)=\dd q(T,X,Y)$.
Thus the cochain operator includes an explicit
antisymmetric-field contribution.

\begin{proposition}\label{prop:family-instanton}
Suppose $\varphi(t)$ is a torsion-free family and its
infinitesimal geometric tensor is written as
\begin{equation}\label{eq:form-variation}
\dot\varphi=\sum_um_u\wedge\iota_u\varphi,\qquad
\dot\psi=\sum_um_u\wedge\iota_u\psi.
\end{equation}
In a fixed identification of the tangent bundle, the corresponding variation $K^{LC}$ of the Levi--Civita connection satisfies
\begin{equation}\label{eq:linearized-instanton}
\pi_7\bigl(\dE K^{LC}+\ff m\bigr)=0.
\end{equation}
If, in addition, $d_0^1m=0$ and the chosen connection variation equals $\cK^1m$, for example for a symmetric closed variation in the symmetric frame, then \eqref{eq:linearized-instanton} is precisely the degree-one closed-cochain relation in \eqref{eq:first-primitive}.
\end{proposition}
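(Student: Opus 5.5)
The linearized instanton condition \eqref{eq:linearized-instanton} will come from differentiating the instanton condition along the family, not from a direct computation.

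For each $t$, the Levi--Civita connection $\nabla^t$ of $g_t$ is torsion-free with $\Hol(g_t)\subseteq G_2$. By \eqref{eq:background-curvature} applied at each $t$, its curvature $R^t$ is of type $14$ with respect to $\varphi(t)$. Using Lemma~\ref{lem:projection}, this is equivalent to
\[
R^t\wedge\psi(t)=0\in\Omega^6(Y,\End(TY)).
\]
We use the $\psi$-formulation because it avoids differentiating the projector $\pi_7^{(t)}$.

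In a fixed identification of the tangent bundle, write $\nabla^t=\nabla+tK^{LC}+O(t^2)$. Then $R^t=R+t\,\dE K^{LC}+O(t^2)$. Differentiating at $t=0$ gives
\[
\dE K^{LC}\wedge\psi+R\wedge\dot\psi=0.
\]
Insert $\dot\psi=\sum_u m_u\wedge\iota_u\psi$ from \eqref{eq:form-variation}. The key algebraic step is to show that
\[
R\wedge\sum_u m_u\wedge\iota_u\psi=\ff m\wedge\psi
\]
modulo terms that do not affect the $\Lambda^2_7$ component. For this we use $R\wedge\psi=0$ and apply $\iota_u$, which gives
\[
\iota_uR\wedge\psi=R\wedge\iota_u\psi.
\]
Hence
\[
R\wedge m_u\wedge\iota_u\psi=m_u\wedge R\wedge\iota_u\psi=m_u\wedge\iota_uR\wedge\psi,
\]
with the sign fixed by moving the $2$-form $R$ past the $p$-form $m_u$. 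By the definition \eqref{eq:f-unprojected}, $\ff m=\tr_g((\iota_\bullet R)\wedge m_\bullet)$, so summing over $u$ gives $\pm\,\ff m\wedge\psi$.

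The sign convention has to be matched carefully, including the order of the wedge factors and the sign of $R$ versus $\dE K$. We expect this bookkeeping to be the main obstacle. We would check it against the constant-coefficient case of Proposition~\ref{prop:K} in degree one. There \eqref{eq:K-unprojected} gives $\dE\cK m+\ff m=\cK\dn m+\Pi$-invisible terms, and this fixes the relative sign as $+$.

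With the sign fixed, the result is $(\dE K^{LC}+\ff m)\wedge\psi=0$. Wedging a two-form with $\psi$ kills exactly the $\Lambda^2_{14}$ part and is injective on $\Lambda^2_7$, by the description of $\Lambda^2_{14}$ and Appendix~\ref{app:realization}. This proves \eqref{eq:linearized-instanton} coefficientwise.

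For the final claim, assume $K^{LC}=\cK^1m$ and $d_0^1m=0$. Then \eqref{eq:linearized-instanton} reads $\pi_7\dE\cK^1m+\pi_7\ff m=0$, that is, $e^1\cK^1m+f^1m=0$, which is exactly \eqref{eq:first-primitive} at $k=1$. The example case follows from \eqref{eq:LC-spin} with $\chi=0$ and $q=0$, as noted before \eqref{eq:LC-q}. Closedness of the variation means $d_0^1m=0$, and by Proposition~\ref{prop:K} this relation is then automatically consistent with $f^1m=\cK^2d_0^1m-e^1\cK^1m$.
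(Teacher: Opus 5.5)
\textbf{Gap: the sign of $R\wedge\dot\psi$ is computed wrongly and then left undetermined.} Your route is the paper's: differentiate $R(t)\wedge\psi(t)=0$ to get $(\dE K^{LC})\wedge\psi+R\wedge\dot\psi=0$, rewrite $R\wedge\dot\psi$ using the contracted background instanton equation, and conclude because $\cdot\wedge\psi$ kills $\Lambda^2_{14}$ and is injective on $\Lambda^2_7$. The problem is in the one step with real content, the sign.

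Your identity $\iota_uR\wedge\psi=R\wedge\iota_u\psi$ is wrong. Since $R$ has even degree, $0=\iota_u(R\wedge\psi)=(\iota_uR)\wedge\psi+R\wedge\iota_u\psi$, so $R\wedge\iota_u\psi=-(\iota_uR)\wedge\psi$. You also never perform the exchange of the two one-forms $m_u$ and $\iota_uR$, which is needed to match $\ff m=\sum_u(\iota_uR)\wedge m_u$. The move of $R$ past $m_u$ that you do mention carries no sign. Completing your chain from your identity gives $R\wedge\dot\psi=-(\ff m)\wedge\psi$, and hence the false relation $\pi_7(\dE K^{LC}-\ff m)=0$. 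This is presumably why you were left with $\pm$.

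Deferring the sign to a comparison with Proposition~\ref{prop:K} does not repair this; it is a consistency check, not a derivation. It would fix the universal sign only if you exhibited a torsion-free family with $K^{LC}=\cK^1m$, $d_0^1m=0$ and $f^1m=\pi_7\ff m\neq0$. Without such an example, both signs are compatible with it. It also relies on $\cK^1m$ being correctly normalized against the actual Levi--Civita variation in \eqref{eq:LC-spin}, which is itself convention-sensitive.

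The direct computation, which is what the paper does, closes the gap:
\[
R\wedge m_u\wedge\iota_u\psi=m_u\wedge R\wedge\iota_u\psi=-m_u\wedge(\iota_uR)\wedge\psi=(\iota_uR)\wedge m_u\wedge\psi .
\]
Summing over $u$ gives $R\wedge\dot\psi=(\ff m)\wedge\psi$ exactly, with no $\Lambda^2_{14}$ terms to discard. With this correction the rest of your argument goes through, including the final identification of $e^1\cK^1m+f^1m=0$ with \eqref{eq:first-primitive}.
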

\begin{proof}
Differentiating $R=\dd\Theta+\Theta\wedge\Theta$ gives
\[
\dot R=\dd K^{LC}+\Theta\wedge K^{LC}
                       +K^{LC}\wedge\Theta=\dE K^{LC}.
\]
The derivative of $R(t)\wedge\psi(t)=0$ is therefore
\begin{equation}\label{eq:instanton-family-derivative}
(\dE K^{LC})\wedge\psi+R\wedge\dot\psi=0.
\end{equation}
To express the second term, contract the background
instanton equation:
\[
0=\iota_u(R\wedge\psi)
 =(\iota_uR)\wedge\psi+R\wedge\iota_u\psi.
\]
Using \eqref{eq:form-variation} and keeping the exterior
order gives
\begin{align}
R\wedge\dot\psi
&=\sum_um_u\wedge R\wedge\iota_u\psi
 =-\sum_um_u\wedge(\iota_uR)\wedge\psi
\notag\\
&=\sum_u(\iota_uR)\wedge m_u\wedge\psi
 =(\ff m)\wedge\psi.
\label{eq:variation-Atiyah}
\end{align}
Wedge with $\psi$ kills precisely $\Lambda^2_{14}$
and is injective on $\Lambda^2_7$, by
the description of $\Lambda^2_{14}$ and the decomposition
\eqref{eq:decompositions}. Equations
\eqref{eq:instanton-family-derivative} and
\eqref{eq:variation-Atiyah} imply
\eqref{eq:linearized-instanton}.
\end{proof}

A change of tangent-frame identification adds
$\dE\lambda$ to $K^{LC}$; the extra term in
\eqref{eq:instanton-family-derivative} is
$[R,\lambda]\wedge\psi=0$. Thus \eqref{eq:linearized-instanton} is independent of the tangent-frame identification. Proposition~\ref{prop:K} itself is a cochain identity and does not require $m$ to integrate to a torsion-free family. Formula \eqref{eq:LC-q} keeps the antisymmetric cochain components distinct from ordinary metric variations.

\section{The codifferential homotopy for the order-\texorpdfstring{$\alpha'$}{alpha-prime} coupling}
\label{sec:second}

\subsection{Definition and domain}

For $a\in\Omega^p(Y,E)$, let
$a^-\in\Omega^p(Y,\Lambda^2T^*Y)$ be its metric-skew part:
\begin{equation}\label{eq:skew-a}
a^-(X,Y)=\frac12\bigl(g(X,aY)-g(Y,aX)\bigr).
\end{equation}
Regard $a^-$ as a $\Lambda^pT^*Y$-valued two-form through the
canonical exchange of tensor factors
\[
\Omega^p(Y,\Lambda^2T^*Y)
=\Gamma(\Lambda^pT^*Y\otimes\Lambda^2T^*Y)
\cong\Omega^2(Y,\Lambda^pT^*Y).
\]
In $d_\nabla^*a^-$, the covariant codifferential
\[
d_\nabla^*:\Omega^2(Y,\Lambda^pT^*Y)
\longrightarrow\Omega^1(Y,\Lambda^pT^*Y)
\]
uses the induced Levi--Civita connection on $\Lambda^pT^*Y$.
Its output is identified with $\Omega^p(Y,T^*Y)$ by exchanging
the two tensor factors again. Define $\ell^k:E^k\to G^k$ and its
weighted version by
\begin{equation}\label{eq:L-definition}
\ell a=\frac12d_\nabla^*a^-,
\qquad
L_E^k=\eps\ell^k:E^k\longrightarrow G_1^k.
\end{equation}
The same definition applies on unrestricted exterior forms.
The induced Levi--Civita connection preserves the skew and symmetric endomorphism summands. Both $b$ and $L_E$ depend only on the skew part $a^-$, while the coefficient bundle remains $E=\End_0(TY)$.
Indeed,
\[
(\bb_0a)(Z)=-\frac14\tau((\iota_ZR)\wedge a)
\]
depends only on $a^-$ because $R(Z,\cdot)$ is skew and the trace
pairing pairs skew and symmetric endomorphisms orthogonally.

Since $\nabla\Pi_k=0$, the covariant codifferential in
\eqref{eq:L-definition} commutes with the canonical projections:
\begin{equation}\label{eq:L-projection}
\Pi_k\ell=\ell\Pi_k.
\end{equation}

\begin{curvaturelemma}
If $S\in\mathfrak{so}(TY)$ corresponds to
$A(U,V)=g(U,SV)$, then
\begin{equation}\label{eq:L-skew-contraction}
\sum_j A(e_j,R(X,e_j)Z)
=-\frac12\tr\bigl(R(Z,X)S\bigr).
\end{equation}
\end{curvaturelemma}
\begin{proof}
Put $C(U)=R(X,U)Z$. Skewness of $S$, pair symmetry and cyclicity
of the trace give
\[
\begin{split}
\sum_j A(e_j,R(e_j,Z)X)
&=\sum_j g(e_j,R(X,Se_j)Z)\\
&=\tr(CS)=\tr(SC)
=\sum_j A(e_j,R(X,e_j)Z).
\end{split}
\]
Contract the first Bianchi identity
\begin{equation}\label{eq:L-Bianchi}
R(Z,X)e_j+R(X,e_j)Z+R(e_j,Z)X=0
\end{equation}
with $A(e_j,\cdot)$ and sum over $j$. The first term is
$\tr(SR(Z,X))$ and the other two are equal, proving the claim.
The identity applies coefficientwise to form-valued $S$.
\end{proof}

\begin{proposition}\label{prop:L}
The coupling $b$ is null-homotopic:
\begin{equation}\label{eq:L-homotopy}
b^k=d_1^kL_E^k-L_E^{k+1}e^k,\qquad k=0,1,2,3.
\end{equation}
\end{proposition}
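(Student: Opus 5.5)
Dividing by $\eps$, the claim is that on every canonical cochain $a\in E^k$
\[
\Pi_{k+1}\bb_0a = \Pi_{k+1}\bigl(\dn\ell a-\ell\,\dE a\bigr).
\]
The plan follows the pattern of Proposition~\ref{prop:K}: first compute the commutator $\dn\ell-\ell\dE$ without projections, in an arbitrary exterior degree $p$, and only then apply the canonical projections. The projections are harmless because \eqref{eq:L-projection} lets $\ell$ commute with $\Pi_k$. Hence $d_1^kL_E^k-L_E^{k+1}e^k=\eps\,\Pi_{k+1}(\dn\ell-\ell\dE)a$, exactly as in \eqref{eq:K-projected-composition}.

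\textbf{Step 1: second-order terms.} Both $\bb_0$ and $\ell$ see only $a^-$, and the induced connection preserves the skew summand. So we may replace $a$ by a $\Lambda^2T^*Y$-valued $p$-form $\omega=a^-$, viewed as a $\Lambda^pT^*Y$-valued two-form. Work in a normal orthonormal frame and write
\[
d_\nabla^*\omega=-\sum_j\iota_{e_j}\nabla_{e_j}\omega
\]
on the two-form slots. We then have
\[
2(\dn\ell\,\omega)_Z=-\sum_{i,j} e^i\wedge\nabla_{e_i}\nabla_{e_j}\omega(e_j,Z),
\qquad
2(\ell\,\dE\omega)_Z=-\sum_{i,j}\nabla_{e_j}\bigl(e^i\wedge\nabla_{e_i}\omega\bigr)(e_j,Z).
\]
The form factor $e^i$ is parallel at the point, so the symmetric second derivatives cancel. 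What remains is
\[
-\tfrac12\sum_{i,j} e^i\wedge\bigl([\nabla_{e_i},\nabla_{e_j}]\omega\bigr)(e_j,Z),
\]
which is tensorial. The commutator acts on three factors: the two slots of the $\Lambda^2$ coefficient, and the $\Lambda^p$ form factor, the latter through $\rho_p$ as in \eqref{eq:curvature-pform-covector}.

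\textbf{Step 2: the coefficient-slot curvature terms.} There are two of them. The first has $R(e_i,e_j)e_j$ inserted into the first slot; it is a Ricci contraction and vanishes because $\Ric(g)=0$ by \eqref{eq:background-curvature}. The second is $\sum_j\omega(e_j,R(e_i,e_j)Z)$, which is exactly the left side of the algebraic curvature lemma \eqref{eq:L-skew-contraction} with $X=e_i$. That lemma converts it to $-\tfrac12\tr(R(Z,e_i)S)$, with $S$ the skew endomorphism of $\omega$. Summing against $e^i$ gives a multiple of $\tau(\cR_Z\wedge a^-)$. I would check that the constants, namely the $\tfrac12$ in $\ell$, the $\tfrac12$ in the lemma, and the $\tfrac12$ from the identification $A(U,V)=g(U,SV)$ versus the $a^-$ normalization, combine to the coefficient $-\tfrac14$ of $\bb_0$. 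This constant and sign bookkeeping, together with the sign and ordering convention $\tau(\cR_Z\wedge a)$ for a $p$-form $a$, is where I expect the main obstacle; I would first verify it in degree $p=0$.

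\textbf{Step 3: the form-factor term.} Using \eqref{eq:form-Bianchi} and \eqref{eq:form-curvature-reduction}, the form-factor term can be written as
\[
\sum_{i,j} e^i\wedge\rho_p(R(e_i,e_j))\,\omega(e_j,Z)=-\sum_j\mathcal I_{R_{e_j}}\,\omega(e_j,Z).
\]
Each summand is a curvature two-form, of type $14$ by \eqref{eq:background-curvature}, wedged with the inserted form $\iota\,\omega(e_j,Z)$, which has degree $p-1$. The degrees then go exactly as in the proof of Proposition~\ref{prop:K}:

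For $k=0$ the insertion vanishes.

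For $k=1$ the summand is a scalar $\Lambda^2_{14}$-form, killed by $\pi_7$.

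For $k=2$ the summand has the shape $\eta_{14}\wedge\lambda$, killed by $\pi_1$ via Lemma~\ref{lem:projection}.

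For $k=3$ all outgoing maps are zero.

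This leaves $\Pi_{k+1}\bb_0a$ in every degree, which proves \eqref{eq:L-homotopy}.
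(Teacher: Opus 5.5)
Your proposal is correct and is essentially the paper's proof: the same normal-frame cancellation of symmetric second derivatives leading to \eqref{eq:L-second-jets}, the same three-factor curvature expansion \eqref{eq:L-commutator}, Ricci-flatness for the contracted slot, the algebraic curvature lemma for the $Z$-slot, \eqref{eq:form-curvature-reduction} for the form factor, and the same degree-by-degree projection argument. The constant you flagged does come out right, as in \eqref{eq:L-b-term}: since $a^-(U,V)=g(U,SV)$ with $S=\tfrac12(a-a^*)$ and no further factor, the three signs (from $d_\nabla^*$, from the derivation action on the second slot, and from the lemma) combine with $\tfrac12\cdot\tfrac12$ to give $-\tfrac14\sum_i e^i\wedge\tr\bigl(R(Z,e_i)S\bigr)=-\tfrac14\tau(\cR_Z\wedge a)$, where skewness of $R(Z,e_i)$ lets you replace $S$ by $a$.
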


\begin{proof}
As for $\cK$, we first work on unrestricted forms. Choose an
orthonormal frame that is normal at the point of calculation, and
extend $Z$ normally there.
Expanding the two derivative compositions then gives
\begin{equation}\label{eq:L-second-jets}
((\dn\ell-\ell\dE)a)(Z)
=-\frac12\sum_{i,j}e^i\wedge
\bigl([\nabla_{e_i},\nabla_{e_j}]a^-\bigr)(e_j,Z).
\end{equation}
The symmetric second-derivative terms cancel exactly, before any
curvature identities are used. The curvature of the induced
connection on $\Lambda^pT^*Y\otimes\Lambda^2T^*Y$ is the sum of
its actions on the form and $\Lambda^2T^*Y$ factors:
\begin{equation}\label{eq:L-commutator}
\begin{split}
\bigl([\nabla_{e_i},\nabla_{e_j}]a^-\bigr)(e_j,Z)
={}&-a^-(R(e_i,e_j)e_j,Z)-a^-(e_j,R(e_i,e_j)Z)\\
&+\rho_p(R(e_i,e_j))\bigl(a^-(e_j,Z)\bigr).
\end{split}
\end{equation}
The contraction over $j$ retains the first curvature term.

For the argument $Z$, the algebraic curvature lemma
gives
\begin{equation}\label{eq:L-b-term}
\frac12\sum_{i,j}e^i\wedge a^-(e_j,R(e_i,e_j)Z)
=-\frac14\tau((\iota_ZR)\wedge a)=(\bb_0a)(Z).
\end{equation}
For the remaining contraction,
$\sum_jR(e_i,e_j)e_j=\Ric^\sharp(e_i)$ with the convention
\eqref{eq:curvature-conventions}. Its contribution vanishes by
\eqref{eq:background-curvature}.

The form-factor term is reduced by
\eqref{eq:form-curvature-reduction}. Consequently the full
unprojected remainder is
\begin{equation}\label{eq:L-unprojected}
\begin{gathered}
(\dn\ell-\ell\dE)a=\bb_0a+\mathscr Q_\ell(a),\\
\mathscr Q_\ell(a)(Z)
=\frac12\sum_i e^i\wedge a^-(\Ric^\sharp(e_i),Z)
 +\frac12\sum_j\mathcal I_{R_{e_j}}\bigl(a^-(e_j,Z)\bigr).
\end{gathered}
\end{equation}
The first term is zero by Ricci-flatness. Both sums are metric
contractions and hence independent of the normal frame.

For a canonical input, \eqref{eq:L-projection}
proves
\begin{equation}\label{eq:L-projected-composition}
d_1^kL_E^ka-L_E^{k+1}e^ka
=\eps\Pi_{k+1}(\dn\ell-\ell\dE)a.
\end{equation}
In degree $0$, insertion into a zero-form is zero. In degree $1$
the inserted form is a scalar, so the remainder is a sum of
curvature forms in $\Lambda^2_{14}$ and has zero $\pi_7$
projection. In degree $2$ the inserted form is a one-form,
so each term lies in the span of $\eta_{14}\wedge\lambda$;
its scalar three-form projection vanishes by
Lemma~\ref{lem:projection}.
In degree $3$ the target is zero.
Applying these statements to
\eqref{eq:L-unprojected} and multiplying by $\eps$
gives \eqref{eq:L-homotopy} in all degrees.
\end{proof}

\subsection{Closed cochains and gauge transformations}

If $e^ka=0$, Proposition~\ref{prop:L} supplies the
explicit geometric primitive
\begin{equation}\label{eq:beta-primitive}
b^ka=d_1^k(L_E^ka).
\end{equation}
It applies to every closed gauge cochain. For $\lambda\in E^0$, the same identity
at degree zero says
\begin{equation}\label{eq:L-gauge-exact}
L_E^1e^0\lambda=d_1^0L_E^0\lambda-b^0\lambda.
\end{equation}
Applying $d_1^1$ and using $d_1^1d_1^0=0$
gives $b^1e^0\lambda=-d_1^1b^0\lambda$.
Thus exact bundle representatives have the required exact image.

Since $\ell$ is defined tensorially from $g$ and $\nabla$, the homotopy is globally defined. Section~\ref{subsec:transgression} relates this homotopy to anomaly transgression.

\section{Cancellation of the residual geometric coupling}
\label{sec:residual}

\subsection{The two curvature terms}

The $\cK$-transformation of \S\ref{subsec:K-splitting} replaces $a$ by $a'+\cK m$. In the geometric row it therefore replaces
$c$ by
\begin{equation}\label{eq:gamma-definition}
\gamma^k=c^k+b^k\cK^k.
\end{equation}

\begin{proposition}\label{prop:gamma}
The residual coupling vanishes:
\begin{equation}\label{eq:gamma-zero}
c^k+b^k\cK^k=0
\end{equation}
in every canonical degree. In fact
$\cc+\bb\cK=0$ before projection on arbitrary
exterior forms.
\end{proposition}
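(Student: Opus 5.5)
The plan is to prove the unprojected identity $\cc+\bb\cK=0$ on arbitrary $T^*Y$-valued $p$-forms $m$. The projected statement \eqref{eq:gamma-zero} then follows at once by applying $\Pi_{k+1}$ to both sides, since $c^k=\Pi_{k+1}\cc$ and $b^k\cK^k=\Pi_{k+1}\bb\cK$ on $G_0^k$. No curvature identity should be needed. The idea is that both terms are the trace pairing of the skew endomorphism $\cR_Z=\iota_ZR$ against an endomorphism-valued $p$-form. The pairing only sees the metric-skew part of its second argument, and the skew part of $\cB(m)$ is a fixed multiple of $\cK m$.

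\emph{Reduction to skew parts.} For each $Z$, $R(Z,\cdot)$ takes values in $\mathfrak{so}(TY)$. Coefficientwise, $\tau(\cR_Z\wedge\beta)$ is a sum of wedge products of scalar forms multiplied by $\tr_7(R(Z,e_i)\beta_J)$. Since $\tr(ST)=0$ for $S$ skew and $T$ symmetric, we may replace $\cB(m)$ by its metric-skew part $\cB(m)^-$. This is the same observation already used after \eqref{eq:L-definition} to show that $\bb_0$ depends only on $a^-$. The trace part of $\cB(m)$ is symmetric, so it also drops out; thus it does not matter that $\cB(m)$ need not lie in $E$.

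\emph{Identification of $\cB(m)^-$.} From the definition $g(X,\cB(m)Y)=(\nabla_Xm)_Y$, the two-form attached to the skew part by the convention $g(X,A^\sharp Y)=A(X,Y)$ is
\[
\tfrac12\bigl((\nabla_Xm)_Y-(\nabla_Ym)_X\bigr)=-\tfrac12(\cK m)(X,Y),
\]
by \eqref{eq:K-definition}. Hence $\cB(m)^-=-\tfrac12\cK m$ as $\mathfrak{so}(TY)$-valued $p$-forms. Substituting into \eqref{eq:c-unprojected} gives
\[
(\cc m)(Z)=-\tfrac{\eps}{2}\tau\bigl(\cR_Z\wedge\cB(m)^-\bigr)=\tfrac{\eps}{4}\tau(\cR_Z\wedge\cK m)=-(\bb\cK m)(Z),
\]
where the last equality is \eqref{eq:b-unprojected} with $a=\cK m$. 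This proves $\cc+\bb\cK=0$ before projection.

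\emph{Main obstacle.} The only real difficulty is bookkeeping of slots and signs. One must check that the subscript in $m_X=m(\cdot,X)$ and the two arguments of $\cK m$ are matched consistently with the identification of two-forms with endomorphisms, so that the factor is exactly $-\tfrac12$ and not $+\tfrac12$. One must also check that the exterior order inside $\tau(\cR_Z\wedge\cdot)$ is the same in both operators, so that no sign $(-1)^{p}$ enters. I would verify this in an orthonormal frame by writing both sides as $\sum_{i,j,l}$ of components, and sanity-check the case $p=0$, where $\cK m$ is the exterior derivative of the one-form $m$ viewed as a skew endomorphism.
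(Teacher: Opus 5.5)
Your proposal is correct and is essentially the paper's proof: you use that $\iota_Z R$ is skew, so the trace pairing sees only $\cB(m)^-$; you identify $(\cK m)^\sharp=-2\cB(m)^-$, with the same sign and factor as the paper; and you cancel $\cc m$ against $\bb\cK m$ before projection. The projected identity then follows by applying $\Pi_{k+1}$ together with $\Pi_k\cK=\cK\Pi_k$.
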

\begin{proof}
Recall that $g(X,\cB(m)Y)=(\nabla_Xm)_Y$. By
\eqref{eq:K-definition}, the metric identification gives
\[
(\cK m)^\sharp=\cB(m)^*-\cB(m)=-2\cB(m)^-,
\qquad \cB(m)^-=\tfrac12(\cB(m)-\cB(m)^*).
\]
Here the adjoint and skew part act on the endomorphism factor.
Since $R(Z,\cdot)$ is skew, the trace pairing satisfies
$\tau((\iota_ZR)\wedge \cB(m))=\tau((\iota_ZR)\wedge \cB(m)^-)$.
Using the trace convention \eqref{eq:tau},
\begin{align}
(\bb_0\cK m)(Z)
&=-\frac14\tau\bigl((\iota_ZR)\wedge(\cK m)^\sharp\bigr)
\notag\\
&=+\frac12\tau\bigl((\iota_ZR)\wedge \cB(m)\bigr).
\label{eq:gamma-bK}
\end{align}
On the other hand,
\begin{equation}\label{eq:gamma-c}
(\cc_0m)(Z)=-\frac12\tau\bigl((\iota_ZR)\wedge \cB(m)\bigr).
\end{equation}
Equations \eqref{eq:gamma-bK} and \eqref{eq:gamma-c}
cancel before projection. Multiplication by $\eps$
and application of the target projector prove
\eqref{eq:gamma-zero}.
\end{proof}

\subsection{Consistency of the complete differential}

\begin{proposition}\label{prop:nilpotency}
The differential \eqref{eq:D-definition} squares to zero
on the weighted coefficient complex. Both
\eqref{eq:first-SES} and \eqref{eq:second-SES}
are short exact sequences of complexes.
\end{proposition}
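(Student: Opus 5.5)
The plan is to derive nilpotency algebraically from the three homotopy identities already proved, rather than recomputing curvature. Squaring the upper-triangular matrix \eqref{eq:D-definition} gives diagonal entries $d_1^2$, $e^2$, $d_0^2$. These vanish because \eqref{eq:canonical-sequence} is a complex for $W=T^*Y$ and for $W=E$, both of which have instanton curvature by \eqref{eq:background-curvature}. The diagonal differential on $G_1$ is $\eps$ times the one on $G_0$. The off-diagonal entries of $D^{k+1}D^k$ are exactly the three expressions in \eqref{eq:three-block-identities}, so it suffices to verify those.

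First, Proposition~\ref{prop:K} gives $f=\cK d_0-e\cK$ in adjacent degrees, so
\[
ef+fd_0=e\cK d_0-ee\cK+\cK d_0d_0-e\cK d_0=0.
\]
Second, Proposition~\ref{prop:L} gives $b=d_1L_E-L_Ee$, so
\[
d_1b+be=d_1d_1L_E-d_1L_Ee+d_1L_Ee-L_Eee=0.
\]
Third, Proposition~\ref{prop:gamma} gives $c=-b\cK$. Then
\[
d_1c+cd_0+bf=-d_1b\cK-b\cK d_0+b(\cK d_0-e\cK)=-(d_1b+be)\cK=0,
\]
using the second identity. At the boundary degrees I would note that all maps out of degree $3$ vanish and $\cK^4=0$ and $L_E^4=0$, so each identity holds trivially whenever a target degree exceeds $3$. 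I would also confirm that Propositions~\ref{prop:K}, \ref{prop:L} and \ref{prop:gamma} were stated for all $k=0,\dots,3$ with these conventions.

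For the short exact sequences, exactness as graded spaces is immediate from the direct-sum decompositions. The inclusion and projection maps in \eqref{eq:first-SES} and \eqref{eq:second-SES} are chain maps because $D_A$ and $D$ are block upper-triangular with the indicated diagonal blocks. Concretely, $D$ restricted to $G_1$ is $d_1$, and the quotient of $D$ by $G_1$ is $D_A$. Nilpotency of $D_A$ follows as the lower $2\times2$ block, using $ef+fd_0=0$. I would also check that $D$ commutes with $N$ as stated, so $B_\tau$ is a complex of $S$-modules.

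The main obstacle is degree bookkeeping, for instance making sure $\cK$ intertwines the projected rather than the unprojected operators. This is already handled by \eqref{eq:K-projection} and \eqref{eq:L-projection}, so the argument reduces to the formal manipulations above.
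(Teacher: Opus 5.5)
Your proposal is correct and matches the paper's proof: the diagonal squares vanish by the canonical-complex property, and the three off-diagonal blocks vanish by substituting Propositions~\ref{prop:K}, \ref{prop:L} and \ref{prop:gamma}, with the mixed block reducing to $-(d_1b+be)\cK=0$. The chain-map property of the inclusions and quotients then follows from the block-triangular form, exactly as in the paper.
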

\begin{proof}
The diagonal squares vanish by
\eqref{eq:canonical-square-one}. Suppressing adjacent
degree labels, Proposition~\ref{prop:K} gives
\[
ef+fd_0
=e(\cK d_0-e\cK)+(\cK d_0-e\cK)d_0=0.
\]
Proposition~\ref{prop:L} similarly gives
\[
d_1b+be
=d_1(d_1L_E-L_Ee)+(d_1L_E-L_Ee)e=0.
\]
Finally, using $c=-b\cK$,
\begin{align}
d_1c+cd_0+bf
&=-d_1b\cK-b\cK d_0+b(\cK d_0-e\cK)
\notag\\
&=-(d_1b+be)\cK=0.
\label{eq:mixed-square-proof}
\end{align}
These are precisely the three off-diagonal blocks
of $D^{k+1}D^k$. The inclusion and quotient maps
already described in Section~\ref{subsec:extensions}
therefore give sequences of complexes.
\end{proof}

\subsection{Anomaly transgression}
\label{subsec:transgression}

The cancellation also has a geometric interpretation.
For a connection $A$ in a local trivialization,
define
\[
\CS_\tau(A)=\tau\bigl(A\wedge\dd A
                      +\tfrac23A\wedge A\wedge A\bigr).
\]
With $a=\delta A$, graded cyclicity and the exterior
Leibniz rule give
\begin{align}
\delta\CS_\tau(A)
&=\tau(a\wedge\dd A+A\wedge\dd a
                         +2a\wedge A\wedge A)
\notag\\
&=2\tau(a\wedge F)-\dd\tau(A\wedge a),
\label{eq:CS-variation}\\
\delta\tau(F\wedge F)
&=2\tau(\dd_Aa\wedge F)
 =2\dd\tau(a\wedge F).
\label{eq:pontryagin-variation}
\end{align}
In the first calculation,
$\dd\tau(A\wedge a)=\tau(\dd A\wedge a-A\wedge\dd a)$.
In the second, invariance of $\tau$ replaces ordinary
by covariant differentiation inside the trace, and
$\dd_AF=0$ removes the remaining term.

At $F=R$, the covariant transgression
$\mathcal T(a)=2\tau(a\wedge R)$ is global,
although $\tau(A\wedge a)$ in
\eqref{eq:CS-variation} is generally only local.
For a one-cochain, contraction gives
\begin{equation}\label{eq:T-contraction}
\iota_r\mathcal T(a)
=2\tau(a_rR+\cR_r\wedge a).
\end{equation}
The first term has exterior type 14, so
\begin{equation}\label{eq:beta-transgression}
(b^1a)_r=-\frac{\eps}{8}\,
                   \pi_7\iota_r\mathcal T(a).
\end{equation}
The same sign and factor are obtained directly from
the curvature pairing in \eqref{eq:b-unprojected}.
To connect the transgression to the global primitive,
apply the algebraic curvature lemma
\eqref{eq:L-skew-contraction}:
\begin{align}
\pi_7\iota_Z\mathcal T(a)
&=2\pi_7\tau\bigl((\iota_ZR)\wedge a\bigr)
\notag\\
&=-4\pi_7\sum_{i,j}e^i\wedge a^-(e_j,R(e_i,e_j)Z)
\notag\\
&=4\pi_7\sum_{i,j}e^i\wedge
   \bigl([\nabla_{e_i},\nabla_{e_j}]a^-\bigr)(e_j,Z).
\label{eq:transgression-commutator}
\end{align}
The last line uses \eqref{eq:L-commutator};
the other terms vanish by Ricci-flatness and the
degree-one curvature projection already proved.
Together with \eqref{eq:L-second-jets},
this recovers \eqref{eq:L-homotopy} in degree one.
This gives exactness in the canonical covector-valued complex.

For the gauge and tangent connection variations
$a$ and $k_{\rm tan}$ at the common background,
\eqref{eq:pontryagin-variation} gives
\begin{equation}\label{eq:anomaly-transgression}
\delta\{\tau(F\wedge F)-\tau(R\wedge R)\}
=2\dd\tau\bigl((a-k_{\rm tan})\wedge R\bigr).
\end{equation}
Here $k_{\rm tan}$ denotes the induced tangent-connection variation. If the gauge and tangent-connection variations coincide, $a=k_{\rm tan}=\cK m$, the two covariant
transgressions agree pointwise; the local boundary
terms also cancel in trivializations related by the fixed standard-embedding identification $V\simeq TY$.
The gauge insertion is
$-\eps\tau(\cR_r\wedge\cK m)/4$, whereas the
gravitational insertion is its negative.
The latter is exactly $c m$ by
\eqref{eq:gamma-bK}--\eqref{eq:gamma-c}.
This explains \eqref{eq:gamma-zero} as the
differential expression of anomaly cancellation.

Equation \eqref{eq:LC-q} relates $\cK^1m$ to the Levi--Civita variation, including the antisymmetric-field contribution. For arbitrary one-cochains, the algebraic identity inserts the same formal variation $\cK m$ in the gauge and tangent-connection sectors. After
$a=a'+\cK m$, \eqref{eq:anomaly-transgression}
retains the independent source from $a'$.
Its canonical image is separately exact by
Proposition~\ref{prop:L}. Thus the two sectors are removed independently.

\Needspace{25\baselineskip}
\section{The decoupling theorem}
\label{sec:theorem}

\begin{theorem}\label{thm:main}
Let $(Y^7,\varphi)$ be a torsion-free $G_2$ manifold.
At the minimal standard embedding
\eqref{eq:standard-embedding}, take
$E=\End_0(TY)$, the invariant form $\tau$ of \eqref{eq:tau},
and the weighted coefficient complex
\eqref{eq:weighted-module} with differential
\eqref{eq:D-definition}. For every canonical
degree $k=0,1,2,3$, the maps
\begin{equation}\label{eq:VW}
V^k=
\begin{pmatrix}
1&-L_E^k&0\\
0&1&\cK^k\\
0&0&1
\end{pmatrix},
\qquad
W^k=
\begin{pmatrix}
1&L_E^k&-L_E^k\cK^k\\
0&1&-\cK^k\\
0&0&1
\end{pmatrix}
\end{equation}
are mutually inverse differential operators on
smooth sections and satisfy
\begin{equation}\label{eq:main-conjugation}
W^{k+1}D^kV^k=\diag(d_1^k,e^k,d_0^k).
\end{equation}
In particular,
\begin{equation}\label{eq:main-splitting}
\boxed{\;
(B_\tau^\bullet,D)\cong
(G_1^\bullet,d_1)\oplus(E^\bullet,e)
                    \oplus(G_0^\bullet,d_0)
\;}
\end{equation}
as complexes of real smooth sections. The maps
also preserve the dual-number action, and give
an $S$-linear chain isomorphism
\begin{equation}\label{eq:S-splitting}
(B_\tau^\bullet,D)\cong
(G^\bullet\otimes_\R S,d\otimes1)
                       \oplus(E^\bullet,e).
\end{equation}
No compactness or full-holonomy hypothesis is required.
\end{theorem}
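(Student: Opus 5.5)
The proof is a direct matrix computation that assembles Propositions~\ref{prop:K}, \ref{prop:L} and~\ref{prop:gamma}. First I check that $V^k$ and $W^k$ are mutually inverse. Both are upper unitriangular with entries that are differential operators: $\cK^k$ is first order and $L_E^k$ is first order. Multiplying gives $V^kW^k=1$, because the $(1,3)$ entry is $-L_E^k\cK^k+L_E^k\cK^k=0$, the $(1,2)$ entry is $L_E^k-L_E^k=0$, and the $(2,3)$ entry is $-\cK^k+\cK^k=0$. The same cancellations give $W^kV^k=1$. Here one uses that $\cK^k$ maps $G_0^k\to E^k$ and $L_E^k$ maps $E^k\to G_1^k$; the commutation relations \eqref{eq:K-projection} and \eqref{eq:L-projection} guarantee this on canonical cochains. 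By convention, $\cK^4$, $L_E^4$ and all maps leaving degree $3$ vanish.

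Next I prove \eqref{eq:main-conjugation} in the equivalent form $D^kV^k=V^{k+1}\diag(d_1^k,e^k,d_0^k)$. Computing $D^kV^k$ column by column gives the following.
\begin{itemize}
\item The first column is $(d_1^k,0,0)^T$, which matches trivially.
\item The second column is $(-d_1^kL_E^k+b^k,\,e^k,\,0)^T$. The right side gives $(-L_E^{k+1}e^k,\,e^k,\,0)^T$. These agree precisely by Proposition~\ref{prop:L}.
\item The third column is $(c^k,\,e^k\cK^k+f^k,\,d_0^k)^T$, while the right side gives $(0,\,\cK^{k+1}d_0^k,\,d_0^k)^T$. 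The middle entries agree by Proposition~\ref{prop:K}, and the top entries agree by Proposition~\ref{prop:gamma}, which states $c^k=-b^k\cK^k$.
\end{itemize}
In the third column I must be careful that the top entry of $D^kV^k$ is really $b^k\cK^k+c^k$. The $(1,2)$ entry of $D^k$ multiplies the $(2,3)$ entry $\cK^k$ of $V^k$, so this contribution appears and is then cancelled by Proposition~\ref{prop:gamma}. I expect this bookkeeping step to be the only delicate point. The chosen shape of $V^k$ has $0$ in the corner, which is what allows the residual cancellation to close the computation. Multiplying on the left by $W^{k+1}=(V^{k+1})^{-1}$ yields \eqref{eq:main-conjugation} for $k=0,\dots,3$. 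The degree-$3$ case is trivial, since all outgoing maps vanish.

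The chain isomorphism \eqref{eq:main-splitting} is then immediate. Since $V^\bullet$ intertwines the diagonal differential with $D$ and is invertible in each degree, it induces an isomorphism of complexes, and hence of cohomology.

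For $S$-linearity I check $NV^k=V^kN$ using \eqref{eq:N-action}. We have $V^kN(x,a,m)=V^k(m,0,0)=(m,0,0)$. In the other order, $NV^k(x,a,m)=N(x-L_Ea,\,a+\cK m,\,m)=(m,0,0)$, using that $L_E=\eps\ell$ lands in $G_1$ and that $N$ reads only the last slot. The same check applies to $W^k$. Under $N$, the summand $G_1\oplus G_0$ with $d_1\oplus d_0$ is exactly $G^\bullet\otimes_\R S$ with $d\otimes1$, because $d_0$ and $d_1$ are the same operator $d$. The summand $E^\bullet$ is killed by $\eps$, which gives \eqref{eq:S-splitting}.

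No global hypothesis enters anywhere. The only inputs are the three propositions, which used only $\nabla\varphi=0$ and Ricci-flatness, so neither compactness nor full holonomy is needed.
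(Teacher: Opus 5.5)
Your proposal is correct and follows essentially the paper's own proof. The paper likewise checks $WV=VW=\id$ by block multiplication, proves $D^kV^k=V^{k+1}\diag(d_1^k,e^k,d_0^k)$ from Propositions~\ref{prop:K}, \ref{prop:L} and~\ref{prop:gamma}, left-multiplies by $W^{k+1}$, and verifies $S$-linearity through $N(x,a,m)=(m,0,0)$. It additionally motivates $V=T_KU$ as a composite of two triangular conjugations, which your direct check does not need. One correction: your bullet list gives the top entry of the third column of $D^kV^k$ as $c^k$, but, as you note afterwards, it is $c^k+b^k\cK^k$, and this sum is what Proposition~\ref{prop:gamma} kills.
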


\begin{proof}
Extend the first transformation by the identity on $G_1$:
\[
T_K^k=
\begin{pmatrix}1&0&0\\0&1&\cK^k\\0&0&1\end{pmatrix},
\qquad
S_K^k=
\begin{pmatrix}1&0&0\\0&1&-\cK^k\\0&0&1\end{pmatrix}.
\]
Direct block multiplication gives
\begin{equation}\label{eq:first-full-conjugation}
S_K^{k+1}D^kT_K^k=
\begin{pmatrix}
d_1^k&b^k&c^k+b^k\cK^k\\
0&e^k&f^k+e^k\cK^k-\cK^{k+1}d_0^k\\
0&0&d_0^k
\end{pmatrix}.
\end{equation}
Propositions~\ref{prop:K} and~\ref{prop:gamma}
set the two entries in the last column above
$d_0$ to zero. The remaining transformation is
\[
U^k=
\begin{pmatrix}1&-L_E^k&0\\0&1&0\\0&0&1\end{pmatrix},
\qquad
(U^k)^{-1}=
\begin{pmatrix}1&L_E^k&0\\0&1&0\\0&0&1\end{pmatrix}.
\]
Its top-middle transformed entry is
$b^k-d_1^kL_E^k+L_E^{k+1}e^k$, which vanishes
by Proposition~\ref{prop:L}. The products
$V=T_KU$ and $W=U^{-1}S_K$ are precisely
\eqref{eq:VW}.

At a fixed degree, direct multiplication gives
\begin{align}
WV&=
\begin{pmatrix}
1&-L_E+L_E&L_E\cK-L_E\cK\\
0&1&\cK-\cK\\
0&0&1
\end{pmatrix}=\id,
\label{eq:WV-product}\\
VW&=
\begin{pmatrix}
1&L_E-L_E&-L_E\cK+L_E\cK\\
0&1&-\cK+\cK\\
0&0&1
\end{pmatrix}=\id.
\label{eq:VW-product}
\end{align}
The top-right term $-L_E\cK$ in $W$ cancels the composition
created by the two off-diagonal entries.

With the degree labels retained, one computes
\begin{align}
D^kV^k
&=
\begin{pmatrix}
d_1^k&b^k-d_1^kL_E^k&c^k+b^k\cK^k\\
0&e^k&f^k+e^k\cK^k\\
0&0&d_0^k
\end{pmatrix}
\notag\\
&=
\begin{pmatrix}
d_1^k&-L_E^{k+1}e^k&0\\
0&e^k&\cK^{k+1}d_0^k\\
0&0&d_0^k
\end{pmatrix}
=V^{k+1}\diag(d_1^k,e^k,d_0^k).
\label{eq:DV-calculation}
\end{align}
Multiplication on the left by $W^{k+1}$,
using \eqref{eq:WV-product}, proves
\eqref{eq:main-conjugation}. This includes
degree $3$, where all outgoing arrows are zero.

Finally, in the unscaled coefficient coordinates
of \eqref{eq:N-action}, the transformations are
\begin{align}
V(x',a',m)&=(x'-\ell a',\,a'+\cK m,\,m),
\label{eq:V-coefficients}\\
W(x,a,m)&=(x+\ell a-\ell\cK m,\,a-\cK m,\,m).
\label{eq:W-coefficients}
\end{align}
Applying $N$ before or after either map gives
$(m,0,0)$. Hence $VN=NV$ and $WN=NW$,
which proves $S$-linearity with the specified
action and establishes \eqref{eq:S-splitting}.
\end{proof}

\subsection{Connecting maps and cohomology}

The connecting maps can be computed directly at chain level. For
\eqref{eq:first-SES}, lift a closed $m$ to
$(0,m)$ and apply the differential. The result
is $(fm,0)$, so
\begin{equation}\label{eq:delta-one}
\delta_1^k[m]=[f^km]=[-e^k\cK^km]=0.
\end{equation}
For the second sequence, a closed element
$(a,m)$ of $A^k$ satisfies
$d_0m=0$ and $ea+fm=0$. Lifting it to
$(0,a,m)$ yields
\begin{equation}\label{eq:delta-two}
\delta_2^k[(a,m)]=[b^ka+c^km].
\end{equation}
Put $a'=a-\cK m$. The first splitting gives
$ea'=0$, and Propositions~\ref{prop:L}
and~\ref{prop:gamma} give
\begin{equation}\label{eq:delta-two-primitive}
b^ka+c^km=b^k(a-\cK^km)
          =d_1^kL_E^k(a-\cK^km).
\end{equation}
This is the explicit primitive for the second
connecting class. Equivalently, in split
coordinates $\delta_2([a',m])=[ba']$,
while the geometric contribution is zero
already as a cochain map.

The closed lift in the second
extension is
\begin{equation}\label{eq:second-chain-section}
j^k(a,m)=\bigl(-L_E^k(a-\cK^km),\,a,\,m\bigr).
\end{equation}
Let $q$ and $\iota_2$ denote the quotient and inclusion in
\eqref{eq:second-SES}, respectively. Then $qj=\id$. Moreover,
$S_KD_A=\diag(e,d_0)S_K$ and
$b=d_1L_E-L_Ee$ show that its top component
intertwines the differentials; the lower
components do so by definition.
A complementary chain retraction is
\begin{equation}\label{eq:second-chain-retraction}
r^k(u,a,m)=u+L_E^k(a-\cK^km).
\end{equation}
Its compositions satisfy $r\iota_2=\id$,
$rj=0$, and $\iota_2r+jq=\id$.
These are real chain maps; their relationship
to the $S$-module extension is explained in
Appendix~\ref{app:coefficients}.

\begin{corollary}\label{cor:cohomology}
In every degree,
\begin{equation}\label{eq:H-decomposition}
H^k(B_\tau,D)\cong
H^k(G_1,d_1)\oplus H^k(E,e)\oplus H^k(G_0,d_0).
\end{equation}
With the coefficient action retained, this is
equivalently
\begin{equation}\label{eq:H-S-decomposition}
H^k(B_\tau,D)\cong
\bigl(H^k(G,d)\otimes_\R S\bigr)\oplus H^k(E,e).
\end{equation}
\end{corollary}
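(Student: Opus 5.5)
The corollary follows from Theorem~\ref{thm:main} by taking cohomology. Since $V^\bullet$ and $W^\bullet$ are mutually inverse in each degree, and \eqref{eq:main-conjugation} together with \eqref{eq:DV-calculation} gives $D^kV^k=V^{k+1}\diag(d_1^k,e^k,d_0^k)$, the maps $V$ and $W$ are mutually inverse chain isomorphisms between $(B_\tau^\bullet,D)$ and the direct sum complex $(G_1^\bullet\oplus E^\bullet\oplus G_0^\bullet,\diag(d_1,e,d_0))$. Chain isomorphisms induce isomorphisms on cohomology. The cohomology of a direct sum of complexes with block-diagonal differential is the direct sum of the cohomologies, because kernels and images split componentwise. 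Together these give \eqref{eq:H-decomposition} in each degree $k=0,\dots,3$; degree $3$ needs no separate argument, since all outgoing maps vanish there.

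For the $S$-linear version \eqref{eq:H-S-decomposition}, I would use \eqref{eq:S-splitting}. The maps $V$ and $W$ commute with $N$, so the induced cohomology isomorphism is $S$-linear, where $S$ acts on $H^k(B_\tau,D)$ through $N$. It then remains to identify $H^k(G\otimes_\R S,d\otimes1)$ with $H^k(G,d)\otimes_\R S$. Since $S$ is a free (flat) $\R$-module of rank two, tensoring commutes with kernels and images. Concretely, $G\otimes S=G\oplus\eps G$ with differential $d$ on each summand, and the $\eps$-action sends the first copy isomorphically onto the second at the level of cohomology classes. This matches $H^k(G_0)\oplus H^k(G_1)$ with $H^k(G)\otimes S$, with $G_1=\eps G$ and $d_1(\eps x)=\eps dx$, so $H^k(G_1,d_1)\cong\eps H^k(G,d)$. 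The gauge summand $H^k(E,e)$ carries the zero $\eps$-action, consistent with $\eps E^\bullet=0$.

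I expect no step to be hard. The only point needing care is the module bookkeeping: one must check that the $S$-action on cohomology induced by $N$ in \eqref{eq:N-action} agrees with the tensor-product action on $H^k(G)\otimes S\oplus H^k(E)$ under $V$. This follows because $V$ and $W$ commute with $N$ in the coordinates \eqref{eq:V-coefficients}--\eqref{eq:W-coefficients}, and $N$ restricted to the split complex is $(x',a',m)\mapsto(m,0,0)$. That map is exactly multiplication by $\eps$ on $G\otimes S$ and zero on $E$.
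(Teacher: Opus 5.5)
Your proposal is correct and follows the paper's own argument. You use $V$ and $W$ as inverse chain isomorphisms onto the block-diagonal complex, split its cohomology componentwise, and obtain the $S$-linear form from $VN=NV$, $WN=NW$ together with the componentwise computation of $\ker(d\otimes1)$ and $\im(d\otimes1)$, with $\eps$ acting by zero on $H^k(E,e)$; this is exactly what the paper compresses into its appeal to \eqref{eq:V-coefficients}--\eqref{eq:W-coefficients}.
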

\begin{proof}
Since $W^{k+1}D^k=D_{\rm diag}^kW^k$,
$W$ maps closed cochains to closed cochains
and exact cochains to exact cochains. The
same is true of $V$, and their induced maps
remain inverse. This proves
\eqref{eq:H-decomposition}. In coefficient
coordinates $d\otimes1$ acts separately on
the constant and $\eps$ terms, so its kernel
and image are the corresponding two
coefficients of $\ker d$ and $\im d$.
The quotient is $H^k(G)\otimes S$, with
$N$ sending the constant class to the
$\eps$ class. Together with
\eqref{eq:V-coefficients}--\eqref{eq:W-coefficients},
this proves \eqref{eq:H-S-decomposition}.
\end{proof}

Both long exact sequences consequently break
into short exact sequences
$0\to H^k(E)\to H^k(A)\to H^k(G_0)\to0$
and
$0\to H^k(G_1)\to H^k(B_\tau)\to H^k(A)\to0$
of real vector spaces, with the displayed
chain-induced sections in every degree.

\section{Relation to the Calabi--Yau standard embedding}
\label{sec:CY}

The standard-embedding proof in \cite[Section~3]{CMPSS}\footnote{Equation numbers refer to the published version. In arXiv:2409.04350v2, the equations following the second extension are numbered one lower.}
has two structural features also visible here. A
connection variation removes the first curvature
coupling, and substituting that variation into the
second extension cancels an explicit geometric
curvature term. The remaining bundle source must
then be shown to be exact. The geometry used for
these steps is different in the two settings.

In the Calabi--Yau first extension, the local
connection variation is a holomorphic covariant
derivative of the complex-structure cochain:
\begin{equation}\label{eq:CY-response}
a^\sigma{}_\tau
=\nabla_\tau\Delta^\sigma+(a_0)^\sigma{}_\tau
\end{equation}
in the convention of \cite[(3.8)]{CMPSS}.
Commuting the antiholomorphic differential with
that derivative, and using the K\"ahler curvature
symmetries, produces the Atiyah curvature
insertion for a closed $\Delta$.
The real skew derivative $\cK$ plays the
corresponding role here. Its curvature
commutator contains the additional form-factor curvature
terms in \eqref{eq:K-unprojected}; their
disappearance requires both the exterior
instanton condition and the two separate
canonical projections.

After substituting \eqref{eq:CY-response}
into their second extension,
\cite[(3.16)--(3.17)]{CMPSS} cancels the
geometric derivative-curvature contribution
and retains the pairing of curvature with
$a_0$. The corresponding cancellation in
our convention is
$c+b\cK=0$. Its unprojected proof
\eqref{eq:gamma-bK}--\eqref{eq:gamma-c}
uses the real matrix trace and the skew
tangent-connection variation. In both cases the paired
gauge and gravitational transgressions
explain why the induced variation is the
relevant one. The second extension also contains the independent bundle deformation $a_0$.

For that remaining source, the argument in
\cite[(3.18)--(3.19)]{CMPSS} uses harmonic
test forms, the holomorphic volume form,
K\"ahler identities and integration by
parts. Antisymmetrization
over four holomorphic indices in complex
dimension three is part of the calculation.
In the $G_2$ proof, the explicit covariant-codifferential homotopy $\ell$ replaces this Hodge-theoretic argument. Ricci-flatness removes its
contracted endomorphism-index curvature
term, and
\[
\pi_7 R_{14}=0,\qquad
\pi_1(R_{14}\wedge\lambda_1)=0
\]
remove the form-factor curvature terms.
These are precisely the dimension-seven
projection identities proved in
Section~\ref{sec:geometry}.

Once a null-homotopy is known, the passage
to a triangular chain transformation is
homological algebra.
The geometric content is the construction
and verification of that homotopy.
The present proof gives an all-degree chain isomorphism by differential operators for the specified coefficient complex. This is a more explicit
chain-level conclusion than the
deformation-cohomology decomposition
displayed in \cite[(3.20)]{CMPSS}. The two constructions use different coefficient bundles and perturbative field identifications.

\section{Physical interpretation and scope}
\label{sec:discussion}

Theorem~\ref{thm:main} removes all internal extension couplings
from the physical differential after elimination of the independent
tangent-connection variation. It applies to the differential
and weighted coefficient module of Section~\ref{sec:heterotic}
on any torsion-free $G_2$ manifold, including compact backgrounds
with full or reduced holonomy.

In $M=m+\eps x$, the summands $G_0$ and $G_1$ are the zeroth-
and first-order coefficients of the same geometric field.
Equation~\eqref{eq:H-S-decomposition} expresses the cohomology
decomposition with the coefficient action preserved.

The gauge sector is $E=\End_0(TY)$ at $V=TY$. A complete
$E_8\times E_8$ or $SO(32)$ spectrum additionally includes the
centralizer and other representation components, whose bundles
and physical multiplicities lie beyond the present theorem.

Theorem~\ref{thm:main} concerns the linear coefficient complex at the
standard embedding. The corresponding nonlinear deformation problem has
a different geometric organization, which we discuss next.

\section{Outlook}
\label{sec:outlook}

The chain isomorphism of Theorem~\ref{thm:main} concerns the coefficient
complex of Section~3. A corresponding nonlinear gauge--geometry problem can
instead be formulated directly in terms of the geometric and gauge fields.
Finite-deformation $L_\infty$ methods in the complex-geometric heterotic
setting were developed in \cite{FiniteDeformations}.
At $\alpha'=0$, let $\varphi$ be a torsion-free $G_2$ structure. Using the
fixed standard-embedding identification $V\simeq TY$, regard $A$ as a
connection on $TY$ and restrict attention to connections for which
$q=A-\nabla^\varphi\in\Omega^1(Y,E)$. The relative connection
\begin{equation}\label{eq:outlook-relative}
q=A-\nabla^\varphi\in\Omega^1(Y,E)
\end{equation}
measures the deviation from the standard embedding. Its curvature satisfies
\begin{equation}\label{eq:outlook-curvature}
F_A=R_\varphi+\dd_{\nabla^\varphi}q+\frac12[q,q].
\end{equation}
Here $[q,q]=2q\wedge q$ is the graded commutator.
Since $R_\varphi\wedge\psi_\varphi=0$ for torsion-free $\varphi$, the
instanton equation becomes
\begin{equation}\label{eq:outlook-instanton}
\left(\dd_{\nabla^\varphi}q+\frac12[q,q]\right)\wedge\psi_\varphi=0.
\end{equation}
Thus $q=0$ is the nonlinear standard-embedding locus.

This formulation is triangular at $\alpha'=0$: the torsion-free $G_2$
equations determine the geometry independently of $q$, whereas the
instanton equation depends on $\varphi$ through both $\nabla^\varphi$
and the $G_2$ projection. Accordingly, even though the unary deformation
problem is split in relative-connection variables, the instanton
deformation complex varies with the $G_2$ structure. Let $\varphi_s$ be a smooth torsion-free family. After choosing a smooth identification of the graded bundles along the family, let $\mathscr D_s$ denote the corresponding instanton differential on a fixed graded space. Then
\begin{equation}\label{eq:outlook-rho}
\rho_u=\left.\frac{d}{ds}\right|_{s=0}\mathscr D_s,
\qquad u=\dot\varphi_0,
\end{equation}
satisfies
\begin{equation}\label{eq:outlook-chain}
\mathscr D_0\rho_u+\rho_u\mathscr D_0=0.
\end{equation}
It therefore induces a map on instanton cohomology,
\begin{equation}\label{eq:outlook-cohomology}
H^1_{\mathscr D_0}\longrightarrow H^2_{\mathscr D_0},\qquad [\eta]\longmapsto[\rho_u\eta],
\end{equation}
which measures the first-order obstruction to extending an instanton
deformation class along the geometric direction $u$. Write
\[
\eta_s=\eta+s\eta_1+O(s^2),\qquad
\mathscr D_s\eta_s=O(s^2).
\]
Expanding to first order gives
\[
\mathscr D_0\eta_1+\rho_u\eta=0.
\]
Thus the class $[\rho_u\eta]\in H^2_{\mathscr D_0}$ is the obstruction to
extending $[\eta]$ to first order along the family $\varphi_s$.

The nonlinear coupling is therefore not captured by transporting a
fixed $L_\infty$ structure on the coefficient complex through the chain
maps of Theorem~\ref{thm:main}. Rather, the instanton deformation algebra
itself varies over the deformation space of torsion-free $G_2$
structures. The variation of its differential gives the first mixed
interaction, while the variation of its projected bracket and the
corresponding higher coherences organize the subsequent nonlinear terms.
Thus the linear splitting may not extend to a direct-product nonlinear
deformation problem: already at quadratic order, variation of the
instanton complex along torsion-free $G_2$ deformations produces mixed
gauge--geometry operations. The resulting higher structure, together
with its first-order $\alpha'$ corrections, will be studied separately.

\section*{Acknowledgements}
Generative AI, specifically OpenAI's ChatGPT 6 Astra, was used to generate and validate tensor identities appearing in the proofs through symbolic computations in Cadabra and SageMath, and to assist in structuring the prose of this manuscript.

\appendix
\section{Projector normalizations and form-degree conversion}
\label{app:realization}

Choose a local oriented orthonormal coframe
$e^1,\ldots,e^7$, with dual frame $e_1,\ldots,e_7$. Our
normalization is
\begin{align}
\varphi
&=e^{123}+e^{145}+e^{167}+e^{246}
-e^{257}-e^{347}-e^{356},
\label{eq:phi}\\
\varphi\wedge\psi&=7\vol_g,\qquad
\varphi_{imn}\varphi_j{}^{mn}=6g_{ij}.
\label{eq:phi-normalization}
\end{align}
With respect to this normalization, the orthogonal projectors used in
Section~2 are
\begin{align}
(\pi^2_7\eta)_{ij}
&=\frac16\varphi_{ijk}\varphi^{abk}\eta_{ab},
&\pi^2_{14}&=1-\pi^2_7,
\label{eq:p2}\\
\pi^3_1\zeta
&=\frac{\zeta_{abc}\varphi^{abc}}{42}\varphi,
&\pi^3_7\zeta
&=\frac14\sum_i
\langle\iota_{e_i}\psi,\zeta\rangle\iota_{e_i}\psi,
\label{eq:p3}\\
\pi^3_{27}&=1-\pi^3_1-\pi^3_7.&&\notag
\end{align}
For the form-degree conversion, define
\[
J_0=J_1=\id,\qquad
J_2(\eta)=\eta\wedge\psi,\qquad
J_3(\zeta)=\zeta\wedge\psi.
\]
The normalization identities

$$
*(\iota_v\varphi\wedge\psi)=3v^\flat,
\qquad
(t\varphi)\wedge\psi=7t\vol_g
$$

give the inverse maps:
\begin{equation}\label{eq:J-inverses}
J_2^{-1}\omega
=\frac13\iota_{(*\omega)^\sharp}\varphi
\quad(\omega\in\Lambda^6),
\qquad
J_3^{-1}\nu
=\frac{*\nu}{7}\varphi
\quad(\nu\in\Lambda^7).
\end{equation}
Moreover,
\begin{equation}\label{eq:J-projections}
(\pi_7\eta)\wedge\psi=\eta\wedge\psi,
\qquad
(\pi_1\zeta)\wedge\psi=\zeta\wedge\psi,
\end{equation}
for every two-form $\eta$ and three-form $\zeta$. The first identity
uses $\Lambda^2_{14}=\ker(\,\cdot\,\wedge\psi)$, and the second follows
from orthogonal projection onto $\R\varphi$. Both identities apply
coefficientwise to bundle-valued forms.

Let

$$
\widehat d_W^k
 =J_{k+1}\check\dd_W^kJ_k^{-1}.
$$

The degree-zero arrow is $\dd_W$. In degree one,
\eqref{eq:J-projections} gives

$$
\widehat d_W^1a
 =(\pi_7\dd_Wa)\wedge\psi
 =\psi\wedge\dd_Wa.
$$

For a degree-two canonical representative $\eta$,

$$
J_3\check\dd_W^2\eta
 =(\dd_W\eta)\wedge\psi
 =\dd_W(\eta\wedge\psi),
$$

since $\dd\psi=0$. Thus $\widehat d_W^2=\dd_W$ on six-forms,
and the outgoing seven-form arrow is zero. This is the
$0,1,6,7$ realization of the canonical complex, with canonical
degree still equal to $0,1,2,3$.

\section{The coefficient module and its cohomology}
\label{app:coefficients}

The geometric dual-number module decomposes over $\R$ as

$$
G^\bullet\otimes_\R S
 =G^\bullet\oplus\eps G^\bullet.
$$

In the ordering $(x,a,m)$, multiplication by $\eps$ is
\begin{equation}\label{eq:N-matrix}
N=
\begin{pmatrix}
0&0&1\\
0&0&0\\
0&0&0
\end{pmatrix},
\qquad N^2=0.
\end{equation}
The unscaled coefficient differential is
\begin{equation}\label{eq:D-unscaled}
D_{\rm coeff}
=
\begin{pmatrix}
d&b_0&c_0\\
0&e&f\\
0&0&d
\end{pmatrix}.
\end{equation}
Both $D_{\rm coeff}N$ and $ND_{\rm coeff}$ have only the
top-right block $d$, so

$$
D_{\rm coeff}N=ND_{\rm coeff}.
$$

The diagonal differential has the same property, and
\eqref{eq:V-coefficients}--\eqref{eq:W-coefficients} show that the
chain isomorphism commutes with $N$. This proves preservation of the
$S$-module structure in \eqref{eq:S-splitting}.

The second exact sequence is a sequence of $S$-modules, but its
chain section \eqref{eq:second-chain-section} is only $\R$-linear.
This is already forced by
\begin{equation}\label{eq:dual-number-SES}
0\longrightarrow\eps S
\longrightarrow S
\longrightarrow S/(\eps)
\longrightarrow0.
\end{equation}
Indeed, an $S$-linear section sending $1$ to $1+\eps t$ would give

$$
0=s(\eps\cdot1)
 =\eps\,s(1)
 =\eps(1+\eps t)
 =\eps,
$$

a contradiction. The same obstruction applies to every nonzero
geometric cochain.

Finally,

$$
d(m+\eps x)=0
\quad\Longleftrightarrow\quad
dm=dx=0,
$$

and the exact elements are $dv+\eps dw$. Hence the diagonal
geometric cohomology is $H(G)\otimes_\R S$. Under the full chain
isomorphism, a closed coefficient cochain $(x,a,m)$ is represented by
\begin{equation}\label{eq:explicit-H-map}
\bigl(
[x+\ell(a-\cK m)],
[a-\cK m],
[m]
\bigr).
\end{equation}
The chain relation shows that changing $(x,a,m)$ by an exact
$D_{\rm coeff}$-cochain changes the three entries by diagonal
coboundaries. Thus \eqref{eq:explicit-H-map} is well-defined, with
inverse induced by \eqref{eq:V-coefficients}.

\bibliographystyle{amsplain}
\bibliography{references}

@article{Atiyah,
  author = {Atiyah, Michael F.},
  title = {Complex analytic connections in fibre bundles},
  journal = {Trans. Amer. Math. Soc.},
  volume = {85},
  number = {1},
  year = {1957},
  pages = {181--207},
  doi = {10.1090/S0002-9947-1957-0086359-5},
  note = {\href{https://doi.org/10.1090/S0002-9947-1957-0086359-5}{doi:10.1090/S0002-9947-1957-0086359-5}}
}

@article{CHSW,
  author = {Candelas, Philip and Horowitz, Gary T. and Strominger, Andrew and Witten, Edward},
  title = {Vacuum configurations for superstrings},
  journal = {Nuclear Phys. B},
  volume = {258},
  year = {1985},
  pages = {46--74},
  doi = {10.1016/0550-3213(85)90602-9},
  note = {\href{https://doi.org/10.1016/0550-3213(85)90602-9}{doi:10.1016/0550-3213(85)90602-9}}
}

@article{FiniteDeformations,
  author = {Ashmore, Anthony and {de la Ossa}, Xenia and Minasian, Ruben and Strickland-Constable, Charles and Svanes, Eirik Eik},
  title = {Finite deformations from a heterotic superpotential: holomorphic {Chern--Simons} and an {$L_\infty$} algebra},
  journal = {J. High Energy Phys.},
  volume = {10},
  year = {2018},
  pages = {179},
  doi = {10.1007/JHEP10(2018)179},
  eprint = {1806.08367},
  archivePrefix = {arXiv},
  note = {\href{https://doi.org/10.1007/JHEP10(2018)179}{doi:10.1007/JHEP10(2018)179}; \href{https://arxiv.org/abs/1806.08367}{arXiv:1806.08367}}
}

@article{FernandezGray,
  author = {Fern{\'a}ndez, Marisa and Gray, Alfred},
  title = {Riemannian manifolds with structure group {$G_2$}},
  journal = {Ann. Mat. Pura Appl.},
  volume = {132},
  year = {1982},
  pages = {19--45},
  doi = {10.1007/BF01760975},
  note = {\href{https://doi.org/10.1007/BF01760975}{doi:10.1007/BF01760975}}
}

@article{Carrion,
  author = {{Reyes Carri{\'o}n}, Ram{\'o}n},
  title = {A generalization of the notion of instanton},
  journal = {Differential Geom. Appl.},
  volume = {8},
  number = {1},
  year = {1998},
  pages = {1--20},
  doi = {10.1016/S0926-2245(97)00013-2},
  note = {\href{https://doi.org/10.1016/S0926-2245(97)00013-2}{doi:10.1016/S0926-2245(97)00013-2}}
}

@article{FernandezUgarte,
  author = {Fern{\'a}ndez, Marisa and Ugarte, Luis},
  title = {Dolbeault cohomology for {$G_2$}-manifolds},
  journal = {Geom. Dedicata},
  volume = {70},
  year = {1998},
  pages = {57--86},
  doi = {10.1023/A:1004940807017},
  note = {\href{https://doi.org/10.1023/A:1004940807017}{doi:10.1023/A:1004940807017}}
}

@article{DLS,
  author = {{de la Ossa}, Xenia and Larfors, Magdalena and Svanes, Eirik Eik},
  title = {Infinitesimal moduli of {$G_2$} holonomy manifolds with instanton bundles},
  journal = {J. High Energy Phys.},
  volume = {11},
  year = {2016},
  pages = {016},
  doi = {10.1007/JHEP11(2016)016},
  eprint = {1607.03473},
  archivePrefix = {arXiv},
  note = {\href{https://doi.org/10.1007/JHEP11(2016)016}{doi:10.1007/JHEP11(2016)016};
          \href{https://arxiv.org/abs/1607.03473v2}{arXiv:1607.03473v2}}
}

@article{DLSHeterotic,
  author = {{de la Ossa}, Xenia and Larfors, Magdalena and Svanes, Eirik Eik},
  title = {The infinitesimal moduli space of heterotic {$G_2$} systems},
  journal = {Commun. Math. Phys.},
  volume = {360},
  number = {2},
  year = {2018},
  pages = {727--775},
  doi = {10.1007/s00220-017-3013-8},
  eprint = {1704.08717},
  archivePrefix = {arXiv},
  note = {\href{https://doi.org/10.1007/s00220-017-3013-8}{doi:10.1007/s00220-017-3013-8};
          \href{https://arxiv.org/abs/1704.08717}{arXiv:1704.08717}}
}

@article{MSS,
  author = {McOrist, Jock and Sticka, Martin and Svanes, Eirik Eik},
  title = {The physical moduli of heterotic {$G_2$} string compactifications},
  journal = {J. High Energy Phys.},
  volume = {05},
  year = {2025},
  pages = {219},
  doi = {10.1007/JHEP05(2025)219},
  eprint = {2409.13080},
  archivePrefix = {arXiv},
  note = {\href{https://doi.org/10.1007/JHEP05(2025)219}{doi:10.1007/JHEP05(2025)219};
          \href{https://arxiv.org/abs/2409.13080v3}{arXiv:2409.13080v3}}
}

@article{MSSMetric,
  author = {McOrist, Jock and Sticka, Martin and Svanes, Eirik Eik},
  title = {The heterotic {$G_2$} moduli space metric},
  journal = {J. High Energy Phys.},
  volume = {11},
  year = {2025},
  pages = {016},
  doi = {10.1007/JHEP11(2025)016},
  eprint = {2502.16093},
  archivePrefix = {arXiv},
  note = {\href{https://doi.org/10.1007/JHEP11(2025)016}{doi:10.1007/JHEP11(2025)016};
          \href{https://arxiv.org/abs/2502.16093v1}{arXiv:2502.16093v1}}
}

@article{CMPSS,
  author = {Chisamanga, Beatrice and McOrist, Jock and Picard, Sebastien and Svanes, Eirik Eik},
  title = {The decoupling of moduli about the standard embedding},
  journal = {J. High Energy Phys.},
  volume = {01},
  year = {2025},
  pages = {032},
  doi = {10.1007/JHEP01(2025)032},
  eprint = {2409.04350},
  archivePrefix = {arXiv},
  note = {\href{https://doi.org/10.1007/JHEP01(2025)032}{doi:10.1007/JHEP01(2025)032};
          \href{https://arxiv.org/abs/2409.04350v2}{arXiv:2409.04350v2}}
}

@article{BPS,
  author = {Kupka, Julian and Strickland-Constable, Charles and Svanes, Eirik Eik and Tennyson, David and Valach, Fridrich},
  title = {{BPS} complexes and {Chern--Simons} theories from {$G$}-structures in gauge theory and gravity},
  journal = {J. High Energy Phys.},
  volume = {10},
  year = {2025},
  pages = {192},
  doi = {10.1007/JHEP10(2025)192},
  eprint = {2406.03550},
  archivePrefix = {arXiv},
  note = {\href{https://doi.org/10.1007/JHEP10(2025)192}{doi:10.1007/JHEP10(2025)192};
          \href{https://arxiv.org/abs/2406.03550v1}{arXiv:2406.03550v1}}
}

@incollection{QuantumG2,
  author = {{de la Ossa}, Xenia and Larfors, Magdalena and Magill, Matthew and Svanes, Eirik E.},
  title = {Quantum aspects of heterotic {$G_2$} systems},
  booktitle = {2024 MATRIX Annals, Part II},
  series = {MATRIX Book Series},
  volume = {8},
  publisher = {Springer},
  year = {2026},
  pages = {253--293},
  doi = {10.1007/978-3-032-16206-9_16},
  eprint = {2412.14715},
  archivePrefix = {arXiv},
  note = {\href{https://doi.org/10.1007/978-3-032-16206-9_16}{doi:10.1007/978-3-032-16206-9\_16};
          \href{https://arxiv.org/abs/2412.14715v4}{arXiv:2412.14715v4}}
}

@article{Coupled,
  author = {Garcia-Fernandez, Mario and Lotay, Jason D. and {S{\'a} Earp}, Henrique N. and {da Silva Jr.}, Agnaldo A.},
  title = {Coupled {$G_2$}-instantons},
  journal = {Internat. J. Math.},
  volume = {37},
  number = {05n06},
  year = {2026},
  pages = {2542002},
  doi = {10.1142/S0129167X25420029},
  eprint = {2404.12937},
  archivePrefix = {arXiv},
  note = {\href{https://doi.org/10.1142/S0129167X25420029}{doi:10.1142/S0129167X25420029};
          \href{https://arxiv.org/abs/2404.12937}{arXiv:2404.12937}}
}

@article{CGT,
  author = {Clarke, Andrew and Garcia-Fernandez, Mario and Tipler, Carl},
  title = {{$T$}-dual solutions and infinitesimal moduli of the {$G_2$}-{Strominger} system},
  journal = {Adv. Theor. Math. Phys.},
  volume = {26},
  number = {6},
  year = {2022},
  pages = {1669--1704},
  doi = {10.4310/ATMP.2022.v26.n6.a3},
  eprint = {2005.09977},
  archivePrefix = {arXiv},
  note = {\href{https://doi.org/10.4310/ATMP.2022.v26.n6.a3}{doi:10.4310/ATMP.2022.v26.n6.a3};
          \href{https://arxiv.org/abs/2005.09977}{arXiv:2005.09977}}
}
\end{document}